\providecommand{\U}[1]{\protect\rule{.1in}{.1in}}
\newtheorem{theorem}{Theorem}[section]
\newtheorem{example}[theorem]{Example}
\newtheorem{lemma}[theorem]{Lemma}
\newtheorem{remark}[theorem]{Remark}
\newenvironment{proof}[1][Proof]{\noindent\textbf{#1.} }{\ \rule{0.5em}{0.5em}}
\numberwithin{equation}{section}
\begin{document}

\title{Explicit $\theta$-Schemes for Solving Anticipated Backward Stochastic
Differential Equations }
\author{Mingshang Hu \thanks{Zhongtai Securities Institute for Financial Studies,
Shandong University, Jinan, Shandong 250100, PR China. humingshang@sdu.edu.cn.
Research supported by NSF (No. 11671231) and Young Scholars Program of
Shandong University (No. 2016WLJH10). }
\and Lianzi Jiang\thanks{Zhongtai Securities Institute for Financial Studies,
Shandong University, Jinan, Shandong 250100, PR China. jianglianzi95@163.com.}}
\maketitle

\textbf{Abstract}.
In this paper, a class of stable explicit $\theta$-schemes are proposed for
solving anticipated backward stochastic differential equations (anticipated BSDEs) which generator
not only contains the present values of the solutions but also the future. We
subtly transform the delay process of the generator into the current
measurable process, resulting in high-order convergence rate. We also analyze
the stability of our numerical schemes and strictly prove the error estimates.
Various numerical tests powerful demonstrate high accuracy of the proposed
numerical schemes.

\textbf{Keywords}.
anticipated BSDEs, $\theta$-Schemes, error estimates, second order, delay process

\textbf{AMS subject classifications}.  60H35, 65C20, 60H10

\section{Introduction\label{sec1}}

We consider the anticipated backward stochastic differential equations
(anticipated BSDEs) on a complete filtered probability space $\left(
\Omega,\mathcal{F},\mathbb{F},P\right)  $, which generator contains not only
the present values of $\left(  Y_{\cdot},Z_{\cdot}\right)  $, but also the
future. The general form of the anticipated BSDEs is as follow:
\begin{equation}
\left\{
\begin{array}
[c]{ll}%
-dY_{t}=\mathbf{f}\left(  t,Y_{t},Z_{t},Y_{t+\delta(t)},Z_{t+\zeta(t)}\right)
dt-Z_{t}dW_{t}, & t\in\left[  0,T\right]  ;\\
Y_{t}=\xi_{t}, & t\in\left[  T,T+S\right]  ;\\
Z_{t}=\eta_{t}, & t\in\left[  T,T+S\right]  ,
\end{array}
\right.  \label{1.1}%
\end{equation}
where $\mathbb{F=(}\mathcal{F}_{t}\mathcal{)}_{t\geq0}$ is the natural
filtration generated by the standard $d$-dimensional Brownian motion $W=$
$(W_{t})_{t\geq0}$, $\delta(\cdot)$ and $\zeta(\cdot)$ are two $\mathbb{R}%
^{+}$-valued continuous functions defined on $[0,T]$, satisfying that

\begin{enumerate}
\item[(i)] there exists a constant $S>0$ such that, for all $t\in\left[
0,T\right]  ,$
\begin{equation*}
\begin{array}
[c]{ll}
t+\delta\left(  t\right)  \leq T+S;\text{ \ }\text{ \ } & t+\zeta(t)\leq T+S,
\end{array}
\label{cond1}
\end{equation*}

\item[(ii)] there exists a constant $L_{0}>0$ such that, for all $t\in\left[
0,T\right]  $ and for all nonnegative and integrable $g(\cdot)$,
\begin{equation*}
\begin{array}
[c]{l}
\int_{t}^{T}g\left(  s+\newline\delta\left(  s\right)  \right)  ds\leq
L_{0}\int_{t}^{T+S}g\left(  s\right)  ds;\\
\int_{t}^{T}g\left(  s+\zeta\left(  s\right)  \right)  ds\leq L_{0}\int
_{t}^{T+S}g\left(  s\right)  ds.
\end{array}
\label{cond2}
\end{equation*}

\end{enumerate}
The generator $\mathbf{f(}t,\omega,y,z,\xi,\eta):\Omega\times\mathbb{R}
^{m}\times\mathbb{R}^{m\times d}\times L^{2}(\mathcal{F}_{r};\mathbb{R}
^{m})\times L^{2}(\mathcal{F}_{r^{\prime}};\mathbb{R}^{m\times d})\rightarrow
L^{2}(\mathcal{F}_{t};\mathbb{R}^{m})$ is $\mathcal{F}_{t}$-measurable, for
all $t\in\left[  0,T\right]  $, $r,r^{\prime}\in\lbrack t,T+S]$. $\xi_{t}$ and
$\eta_{t}$, $t\in\lbrack T,T+S]$ are given terminal conditions. The pair of
$(Y_{\cdot},Z_{\cdot})$ is called $L^{2}$-solution of the anticipated BSDEs if
it is $\mathcal{F}_{t}$-adapted and square integrable processes.

In 1990, Pardoux and Peng \cite{PP1990} first established the framework for
the nonlinear BSDEs and rigorous proved the existence and uniqueness results
for the solution, their extraordinary works are continued in
\cite{KPQ1997,P1991,P1999,PP1994}. In 2009, Peng and Yang \cite{PY2009} introduced a
new type of BSDE, namely anticipated BSDEs $\left(  \ref{1.1}\right)  $. They
strictly proved the existence and uniqueness of solutions and extended the
duality between stochastic differential delay equations (SDDEs) and anticipated BSDEs to stochastic control problems.
Since then, a large amount of related research is booming,
\cite{BCY2015,BCY2016,CW2010,OS2011} etc..

Since analytical solutions of the backward stochastic differential equations
are often difficult to obtain, the numerical approximation is indispensable.
Many works have be done on the numerical computing of BSDEs, including one
step schemes, such as
\cite{B1997,BD2007,CM,GL2010,GLW2005,MP2002,MT2006,MT2007,Zh2004,ZhCP2006,ZhLZh2012,ZhWP2009}
and multi-step schemes \cite{C2013,ZhFZh2014,ZhZhJ2010,ZhZhJ2016}, but little
attention has been paid to the anticipated BSDEs.

In this paper, we just discuss the special case of the anticipated BSDEs in
the following form:
\begin{equation}
\left\{
\begin{array}
[c]{ll}
-dY_{t}=\mathbb{E}\left[  f\mathbf{(}t,Y_{t},Z_{t},Y_{t+\delta(t)}
,Z_{t+\delta(t)})|\mathcal{F}_{t}\right]  dt-Z_{t}dW_{t}, & t\in\left[
0,T\right]  ;\\
Y_{t}=\xi_{t}, & t\in\left[  T,T+S\right]  ;\\
Z_{t}=\eta_{t}, & t\in\left[  T,T+S\right]  ,
\end{array}
\right.  \label{1.2}
\end{equation}
where the generator $\mathbf{f}=\mathbb{E}[f(t,y_{1},z_{1},y_{2}
,z_{2})|\mathcal{F}_{t}]$, $f:\lbrack0,T]\times\Omega\times\mathbb{R}
^{m}\times\mathbb{R}^{m\times d}\times\mathbb{R}^{m}\times\mathbb{R}^{m\times
d}\rightarrow\mathbb{R}^{m}$ is an $\mathbb{R}^{m}$-valued vector function,
$\delta\left(  t\right)  $ is a $\mathbb{R}^{+}$-valued monotonically
increasing continuous function. Moreover, we can easily verify that
$\delta\left(  t\right)  $ satisfies (i) and (ii), the anticipated BSDEs $\left(  \ref{1.2}
\right)  $ satisfy the conditions of existence and uniqueness Theorem in
\cite{PY2009}, thus have a pair of unique $L^{2}$-solution $\left(
Y_{t},Z_{t}\right)  $. Assume that under some regularity conditions about
$\xi_{t}=\varphi\left(  t,W_{t}\right)  $, $f$ and $\delta\left(  t\right)  $,
the unique solution $\left(  Y_{t},Z_{t}\right)  $ of $\left(  \ref{1.2}
\right)  $ can be represented as (see as \cite{KPQ1997,LSU1968,MZ2002,P1991})
\begin{equation}
Y_{t}=u\left(  t,W_{t}\right)  ,\;\;Z_{t}=\nabla_{x}u\left(  t,W_{t}\right)
,\;\;\forall t\in\left(  0,T\right]  , \label{1.3}
\end{equation}
where $u\left(  t,x\right)  $ is the smooth solution of the following
anticipated parabolic partial integral differential equation
\[
\begin{array}
[c]{l}
\frac{\partial}{\partial t}\left(  t,x\right)  +\frac{1}{2}\sum\limits_{i=1}
^{d}\frac{\partial^{2}}{\partial x_{i}^{2}}u\left(  t,x\right)  +\int
_{\mathbb{R}}\frac{1}{\sqrt{2\pi}}f(t,u\left(  t,x\right)  ,\nabla_{x}u\left(
t,x\right)  ,\\
\text{ \ \ \ \ \ \ \ \ \ \ \ \ }u(t+\delta\left(  t\right)  ,x+\sqrt
{\delta\left(  t\right)  }y),\nabla_{x}u(t+\delta\left(  t\right)
,x+\sqrt{\delta\left(  t\right)  }y))e^{-\frac{\text{ \ }y^{2}}{2}}dy=0,
\end{array}
\]
with the terminal condition $u\left(  s,x\right)  =\varphi\left(  s,x\right)
$, $s\in\lbrack T,T+S]$, and $\nabla_{x}u$ is the gradient of $u$ with respect
to the spacial variable $x=\left(  x_{1},x_{2},\ldots,x_{d}\right)  $.
Furthermore, if $f\in C_{b}^{1+k,3+2k,3+2k,3+2k,3+2k}$, $\varphi\in
C_{b}^{1+k,4+2k}$ and $\sqrt{\delta(t)}\in C_{b}^{1+k}$, then we have $u\in$
$C_{b}^{1+k,3+2k}$, $k=0,1,2,....$

In this paper, we devote to proposing a class of stable explicit $\theta
$-schemes for solving anticipated BSDEs. The central idea is to use the
properties of the conditional mathematical expectation to subtly transform the
delay process in the generator into the current measurable process, which
obtain a high-order local truncation error. We also design effective numerical
methods to compute the conditional mathematical expectation with delay process
and get good results in our numerical tests. To the best of our knowledge,
this is the first attempt to come up with a high order numerical scheme for
solving anticipated BSDEs.

\bigskip For simple representations, we first introduce some notations that
will be used extensively throughout the paper:

\begin{itemize}
\item $\left\vert \cdot\right\vert :$ the standard Euclidean norm in
$\mathbb{R}$, $\mathbb{R}^{m}$ and $\mathbb{R}^{m\times d}$.

\item $L^{2}\left(  \mathcal{F}_{T};\mathbb{R}^{m}\right)  :\mathbb{R}^{m}
$-valued $\mathcal{F}_{T}$-measurable random variables such that
$\mathbb{E}[|\xi|^{2}]<\infty$.

%\item $L_{\mathbb{F}}^{2}(0,T,\mathbb{R}^{m}):\mathbb{R}^{m}$-valued and
%$\mathcal{F}_{t}$-adapted stochastic processes such that $\mathbb{E}[\int
%_{0}^{T}|\varsigma_{t}|^{2}dt]<\infty$.

%\item $S_{\mathbb{F}}^{2}(0,T,\mathbb{R}^{m}):$ continuous processes in
%$L_{\mathbb{F}}^{2}(0,T,\mathbb{R}^{m})$ such that $\mathbb{E}[\sup
%\limits_{0\leq t\leq T}\int_{0}^{T}|\varsigma_{t}|^{2}dt]\newline<\infty$.

\item $\mathbb{E}\left[  X\right]  :$ the conditional mathematical expectation
of $X$.

\item $\mathbb{E}_{t_{n}}^{x}[X]:$ the conditional mathematical expectation of
$X$, under the $\sigma$-field $\sigma\{W_{t}=x\}$, i.e. $\mathbb{E}\left[
X|W_{t_{n}}=x\right]  $.

\item $\Delta W_{t,s}:W_{s}-W_{t}$, $0\leq t\leq s,$ a standard Brownian
motion with mean zero and variance $s-t$.

\item $C_{b}^{l,k,k,k,k}:$ continuously differentiable functions $f:\left[
0,T\right]  \times\mathbb{R}^{m}\times\mathbb{R}^{m\times d}\times
\mathbb{R}^{m}\times\mathbb{R}^{m\times d}\rightarrow \mathbb{R}$ with
uniformly bounded partial derivatives $\partial_{t}^{l_{1}}f$ and
$\partial_{y_{1}}^{k_{1}}\partial_{z_{1}}^{k_{2}}\partial_{y_{2}}^{k_{3}
}\partial_{z_{2}}^{k_{4}}\\f$ for $\frac{1}{2}\leq l_{1}\leq l$ and $1\leq
\sum_{i=1}^{4}k_{i}\leq4$, $k_{i}\geq0$.

\item $C_{b}^{l,k}:$ continuously differentiable functions $\phi:\left[
0,T\right]  \times\mathbb{R}^{d}\rightarrow \mathbb{R}$ with uniformly bounded partial
derivatives $\partial_{t}^{l_{1}}\phi$ and $\partial_{x}^{k_{1}}\phi$ for
$\frac{1}{2}\leq l_{1}\leq l$ and $1\leq k_{1}\leq k$.

\item $C_{b}^{l}:$ continuously differentiable functions $\psi:\left[
0,T\right]  \rightarrow\mathbb{R}^{+}$ with uniformly bounded partial
derivatives $\partial_{t}^{l_{1}}\psi$ for $1\leq l_{1}\leq l$.
\end{itemize}

The paper is organized as follow. In section 2, we propose a class of explicit
$\theta$-schemes for anticipated BSDEs by choosing different parameters
$\theta_{i}(i=1,2,3)$. In section 3, according to the properties of the
conditional mathematical expectation, we subtly transform the delay process in
the generator into the current process, which obtain a high-order local
truncation error. We also analyze the stability of our numerical schemes and
strictly prove the error estimates. In section 4, various numerical tests\ are
given to demonstrate high efficiency and accuracy of our schemes. Finally, we
come to the conclusions in section 5.

\section{Explicit schemes for Anticipated BSDEs\label{sec3}}

\subsection{Reference equations\label{subsec3.1}}

For the time interval $\left[  0,T+S\right]  $, we introduce the following
uniform partition:
\[
0=t_{0}<t_{1}<\cdots<t_{M}=T+S,
\]
with $\Delta t=\frac{T+S}{M}$. For simplicity, we consider $S$ is a rational
number, by choosing $M$ appropriately, we let $S=K\Delta t$ and $T=N\Delta t$,
where $K$ and $N$ are positive integers.

Let $\left(  Y_{t,}Z_{t}\right)  $ be the solution of $\left(  \ref{1.2}
\right)  $, we consider the $\left(  \ref{1.2}\right)  $ on the time interval
$\left[  t_{n},t_{n+1}\right]  $, we can obtain easily
\begin{equation}
Y_{t_{n}}=Y_{t_{n+1}}+\int_{t_{n}}^{t_{n+1}}\mathbb{E}[\left.  f(s,Y_{s}
,Z_{s},Y_{s+\delta(s)},Z_{s+\delta(s)})\right\vert \mathcal{F}_{s}
]ds-\int_{t_{n}}^{t_{n+1}}Z_{s}dW_{s} \label{3.1}
\end{equation}
for $n=0,1,\ldots,N-1$. Taking the conditional mathematical expectation
$\mathbb{E}_{t_{n}}^{x}\left[  \cdot\right]  $ on both sides of $\left(
\ref{3.1}\right)  $, using the formula $\mathbb{E}_{t_{n}}^{x}[\mathbb{E}
\left[  \cdot|\mathcal{F}_{s}\right]  ]=\mathbb{E}_{t_{n}}^{x}[\cdot]$, for
$s\geq t_{n},$ we get
\begin{equation}
Y_{t_{n}}=\mathbb{E}_{t_{n}}^{x}[Y_{t_{n+1}}]+\int_{t_{n}}^{t_{n+1}}
\mathbb{E}_{t_{n}}^{x}[f(s,Y_{s},Z_{s},Y_{s+\delta(s)},Z_{s+\delta(s)})]ds.
\label{3.2}
\end{equation}
For notational simplicity, we denote $f_{s}=f(s,Y_{s},Z_{s},Y_{s+\delta
(s)},Z_{s+\delta(s)})$. It is notable that the integrands $\mathbb{E}_{t_{n}
}^{x}[f_{s}]$ in $\left(  \ref{3.2}\right)  $ is a deterministic function,
then we can use some numerical integration methods to approximate it. Here we
use the $\theta$-scheme to approximate the integral:
\begin{equation}
\int_{t_{n}}^{t_{n+1}}\mathbb{E}_{t_{n}}^{x}[f_{s}]ds=\theta_{1}\Delta
t\mathbb{E}_{t_{n}}^{x}[f_{t_{n}}]+\left(  1-\theta_{1}\right)  \Delta
t\mathbb{E}_{t_{n}}^{x}[f_{t_{n+1}}]+R_{y_{1}}^{n,\delta}, \label{3.3}
\end{equation}
where $\theta_{1}\in\left[  0,1\right]  $ and
\begin{equation}
R_{y_{1}}^{n,\delta}=\int_{t_{n}}^{t_{n+1}}\{\mathbb{E}_{t_{n}}^{x}
[f_{s}]-\theta_{1}\mathbb{E}_{t_{n}}^{x}[f_{t_{n}}]+(1-\theta_{1}
)\mathbb{E}_{t_{n}}^{x}[f_{t_{n+1}}]\}ds. \label{3.4}
\end{equation}
Since $t_{n}+\delta(t_{n})$ and $t_{n+1}+\delta(t_{n+1})$ may not be on the
grid points, we approximate it with the value of the adjacent grid points
$t_{n+\delta_{n}^{-}}$, $t_{n+\delta_{n}^{+}}$, $t_{n+1+\delta_{n+1}^{-}}$and
$t_{n+1+\delta_{n+1}^{+}}$, then
\begin{equation}
\begin{array}
[c]{c}
\theta_{1}\Delta t\mathbb{E}_{t_{n}}^{x}\left[  f_{t_{n}}\right]  =\theta
_{1}\Delta t\mathbb{E}_{t_{n}}^{x}\left[  k_{n}f_{t_{n}^{-}}+\left(
1-k_{n}\right)  f_{t_{n}^{+}}\right]  +R_{y_{2,1}}^{n,\delta},\\
\text{ }(1-\theta_{1})\Delta t\mathbb{E}_{t_{n}}^{x}\left[  f_{t_{n+1}
}\right]  =(1-\theta_{1})\Delta t\mathbb{E}_{t_{n}}^{x}\left[  k_{n+1}
f_{t_{n+1}^{-}}+\left(  1-k_{n+1}\right)  f_{t_{n+1}^{+}}\right]  +R_{y_{2,2}
}^{n,\delta},
\end{array}
\label{3.5}
\end{equation}
where
\[
\begin{array}
[c]{l}
\text{ \ }f_{t_{n}^{-}}=f(t_{n},Y_{t_{n}},Z_{t_{n}},Y_{t_{n+\delta_{n}^{-}}
},Z_{t_{n+\delta_{n}^{-}}}),\\
\text{ \ }f_{t_{n}^{+}}=f(t_{n},Y_{t_{n}},Z_{t_{n}},Y_{t_{n+\delta_{n}^{+}}
},Z_{t_{n+\delta_{n}^{+}}}),\\
\text{ \ }f_{t_{n+1}^{+}}=f(t_{n+1},Y_{t_{n+1}},Z_{t_{n+1}},Y_{t_{n+1+\delta
_{n+1}^{-}}},Z_{t_{n+1+\delta_{n+1}^{-}}}),\\
\text{ \ }f_{t_{n+1}^{+}}=f(t_{n+1},Y_{t_{n+1}},Z_{t_{n+1}},Y_{t_{n+1+\delta
_{n+1}^{+}}},Z_{t_{n+1+\delta_{n+1}^{+}}}),
\end{array}
\]
and
\begin{equation}
\begin{array}
[c]{ll}
\delta_{n}^{-}=[\frac{\delta\left(  t_{n}\right)  }{\Delta t}], & \delta
_{n}^{+}=[\frac{\delta\left(  t_{n}\right)  }{\Delta t}]+1,\\
k_{n}=\frac{\delta_{n}^{+}\Delta t-\delta\left(  t_{n}\right)  }{\Delta t}, &
k_{n+1}=\frac{\delta_{n+1}^{+}\Delta t-\delta\left(  t_{n+1}\right)  }{\Delta
t},
\end{array}
\label{Kn}
\end{equation}
the scale coefficients $k_{n}\in\lbrack0,1]$, $n=0,1,\ldots,N-1$. Combining
$\left(  \ref{3.3}\right)  $ and $\left(  \ref{3.5}\right)  ,$ we deduce that
\begin{equation}
\begin{array}
[c]{l}
Y_{t_{n}}=\mathbb{E}_{t_{n}}^{x}[Y_{t_{n+1}}]+\theta_{1}k_{n}\Delta
t\mathbb{E}_{t_{n}}^{x}[f_{t_{n}^{-}}]+\theta_{1}\left(  1-k_{n}\right)
\Delta t\mathbb{E}_{t_{n}}^{x}[f_{t_{n}^{+}}]+R_{y_{1}}^{n,\delta}\\
\text{ \ \ \ \ \ \ }+(1-\theta_{1})k_{n+1}\Delta t\mathbb{E}_{t_{n}}
^{x}[f_{t_{n+1}^{-}}]+(1-\theta_{1})\left(  1-k_{n+1}\right)  \Delta
t\mathbb{E}_{t_{n}}^{x}[f_{t_{n+1}^{+}}]+R_{y_{2}}^{n,\delta},
\end{array}
\label{3.6}
\end{equation}
where interpolation error $R_{y_{2}}^{n,\delta}=R_{y_{2,1}}^{n,\delta
}+R_{y_{2,2}}^{n,\delta}$. In order to propose an explicit numerical scheme to
solve $\left\{  Y_{t_{n}}\right\}  $, we approximate the $Y_{t_{n}}$ in
$f_{t_{n}^{-}}$ and \ $f_{t_{n}^{+}}$ by the left rectangle formula
\begin{equation}
Y_{t_{n}}=\mathbb{E}_{t_{n}}^{x}[Y_{t_{n+1}}]+\Delta t\mathbb{E}_{t_{n}}
^{x}[f_{t_{n+1}^{-}}]+\tilde{R}_{y}^{n,\delta}, \label{3.7}
\end{equation}
where $\tilde{R}_{y}^{n}=\int_{t_{n}}^{t_{n+1}}\mathbb{E}_{t_{n}}^{x}\left[
f_{s}\right]  -\mathbb{E}_{t_{n}}^{x}[f_{t_{n+1}^{-}}]ds$. According to $\left(\ref{3.6}\right)  $
and $\left(  \ref{3.7}\right)  $, we obtain the following
reference equation:
\begin{equation}
\begin{array}
[c]{l}
Y_{t_{n}}=\mathbb{E}_{t_{n}}^{x}[Y_{t_{n+1}}]+\theta_{1}k_{n}\Delta
t\mathbb{E}_{t_{n}}^{x}[\bar{f}_{t_{n}^{-}}]+\theta_{1}(1-k_{n})\Delta
t\mathbb{E}_{t_{n}}^{x}[\bar{f}_{t_{n}^{+}}]\\
\text{ \ \ \ \ \ \ }+(1-\theta_{1})k_{n+1}\Delta t\mathbb{E}_{t_{n}}
^{x}[f_{t_{n+1}^{-}}]+(1-\theta_{1})\left(  1-k_{n+1}\right)  \Delta
t\mathbb{E}_{t_{n}}^{x}[f_{t_{n+1}^{+}}]+R_{y}^{n,\delta},
\end{array}
\label{3.8ref}
\end{equation}
where
\[
\begin{array}
[c]{l}
\bar{f}_{t_{n}^{-}}=f(t_{n},\mathbb{E}_{t_{n}}^{x}[Y_{t_{n+1}}]+\Delta
t\mathbb{E}_{t_{n}}^{x}[f_{t_{n+1}^{-}}],Z_{t_{n}},Y_{t_{n+\delta_{n}^{-}}
},Z_{t_{n+\delta_{n}^{-}}}),\\
\bar{f}_{t_{n}^{+}}=f(t_{n},\mathbb{E}_{t_{n}}^{x}[Y_{t_{n+1}}]+\Delta
t\mathbb{E}_{t_{n}}^{x}[f_{t_{n+1}^{-}}],Z_{t_{n}},Y_{t_{n+\delta_{n}^{+}}
},Z_{t_{n+\delta_{n}^{+}}}),
\end{array}
\]
and $R_{y}^{n,\delta}=R_{y_{1}}^{n,\delta}+R_{y_{2}}^{n,\delta}+R_{y_{3}
}^{n,\delta}$ with
\begin{equation}
R_{y_{3}}^{n,\delta}=\theta_{1}k_{n}\Delta t\mathbb{E}_{t_{n}}^{x}[\bar
{f}_{t_{n}^{-}}-f_{t_{n}^{-}}]+\theta_{1}\left(  1-k_{n}\right)  \Delta
t\mathbb{E}_{t_{n}}^{x}[\bar{f}_{t_{n}^{+}}-f_{t_{n}^{+}}].
\end{equation}

Now, multiplying $\left(  \ref{3.1}\right)$ by $\Delta W_{t_{n},t_{n+1}}^{\intercal}$
and taking the conditional mathematical expectation
$\mathbb{E}_{t_{n}}^{x}\left[  \cdot\right]  $ on both sides of the derived
equation, by the It\^{o} isometry formula, we obtain
\begin{equation}\label{3.9}
0=\mathbb{E}_{t_{n}}^{x}[Y_{t_{n+1}}\Delta W_{t_{n},t_{n+1}}^{\intercal}]
+\int_{t_{n}}^{t_{n+1}}\mathbb{E}_{t_{n}}^{x}[f_{s}\Delta W_{t_{n},s}^{\intercal}]ds-
\int_{t_{n}}^{t_{n+1}}\mathbb{E}_{t_{n}}^{x}[Z_{s}]ds.
\end{equation}
To simplify the presentation, we use $\Delta W_{t_{n+1}}^{\intercal}$to denote
$\Delta W_{t_{n},t_{n+1}}^{\intercal}$. Similar to the above process, we
rewrite the two integral terms on the right side of $\left(  \ref{3.9}\right)
$ in the following:

\begin{equation}
\begin{array}
[c]{l}
\int_{t_{n}}^{t_{n+1}}\mathbb{E}_{t_{n}}^{x}[f_{s}\Delta W_{t_{n}
,s}^{\intercal}]ds=(1-\theta_{2})k_{n+1}\Delta t\mathbb{E}_{t_{n}}
^{x}[f_{t_{n+1}^{-}}\Delta W_{t_{n+1}}^{\intercal}]+R_{z_{1}}^{n,\delta}\\
\text{ \ \ \ \ \ \ \ \ \ \ \ \ \ \ \ \ \ \ \ \ \ \ \ \ \ \ \ \ \ \ \ }
+(1-\theta_{2})\left(  1-k_{n+1}\right)  \Delta t\mathbb{E}_{t_{n}}
^{x}[f_{t_{n+1}^{+}}\Delta W_{t_{n+1}}^{\intercal}]+R_{z_{2}}^{n,\delta},
\end{array}
\label{3.10}
\end{equation}
\begin{equation}
-\int_{t_{n}}^{t_{n+1}}\mathbb{E}_{t_{n}}^{x}[Z_{s}]ds=-\theta_{3}\Delta
tZ_{t_{n}}-\left(  1-\theta_{3}\right)  \Delta t\mathbb{E}_{t_{n}}
^{x}[Z_{t_{n+1}}]+R_{z_{3}}^{n,\delta}, \label{3.11}
\end{equation}
where $\theta_{2}\in\left[  0,1\right]  $, $\theta_{3}\in(0,1]$

\begin{align}
R_{z_{1}}^{n,\delta}  &  =\int_{t_{n}}^{t_{n+1}}\mathbb{E}_{t_{n}}^{x}
[f_{s}\Delta W_{t_{n},s}^{\intercal}]ds-\left(  1-\theta_{2}\right)
\mathbb{E}_{t_{n}}^{x}[f_{t_{n+1}}\Delta W_{t_{n+1}}^{\intercal}]\Delta t,\\
R_{z_{2}}^{n,\delta}  &  =(1-\theta_{2})\Delta t\mathbb{E}_{t_{n}}^{x}\left[
\left(  f_{t_{n+1}}-k_{n+1}f_{t_{n+1}^{-}}-\left(  1-k_{n+1}\right)
f_{t_{n+1}^{+}}\right)  \Delta W_{t_{n+1}}^{\intercal}\right]  ,
\end{align}
and
\begin{equation}
\begin{array}
[c]{l}
R_{z_{3}}^{n,\delta}=-\int_{t_{n}}^{t_{n+1}}\left\{  \mathbb{E}_{t_{n}}
^{x}[Z_{s}]-\theta_{3}Z_{t_{n}}-(1-\theta_{3})\mathbb{E}_{t_{n}}
^{x}[Z_{t_{n+1}}]\right\}  ds.\text{ \ \ \ \ \ \ \ \ \ \ \ \ \ \ \ }
\end{array}
\end{equation}
According to $\left(  \ref{3.9}\right)  ,\left(  \ref{3.10}\right)  $ and
$\left(  \ref{3.11}\right)  $, we obtain
\begin{equation}
\begin{array}
[c]{l}
\theta_{3}\Delta tZ_{t_{n}}=\mathbb{E}_{t_{n}}^{x}[Y_{t_{n+1}}\Delta
W_{t_{n+1}}^{\intercal}]+(1-\theta_{2})\left(  1-k_{n+1}\right)  \Delta
t\mathbb{E}_{t_{n}}^{x}[f_{t_{n+1}^{+}}\Delta W_{t_{n+1}}^{\intercal}]\\
\text{\ \ \ \ \ \ \ \ \ \ \ \ \ }+(1-\theta_{2})k_{n+1}\Delta t\mathbb{E}
_{t_{n}}^{x}[f_{t_{n+1}^{-}}\Delta W_{t_{n+1}}^{\intercal}]-(1-\theta
_{3})\Delta t\mathbb{E}_{t_{n}}^{x}[Z_{t_{n+1}}]+R_{z}^{n,\delta},
\end{array}
\label{3.12}
\end{equation}
where $R_{z}^{n,\delta}=R_{z_{1}}^{n,\delta}+R_{z_{2}}^{n,\delta}+R_{z_{3}
}^{n,\delta}$. By $\left(  \ref{1.3}\right)  $ and the definition of
$\mathbb{E}_{t_{n}}^{x}\left[  \cdot\right]  $, using the formula of
integration by part, it is easy to prove that:
\begin{equation}
\mathbb{E}_{t_{n}}^{x}[Y_{t_{n+1}}\Delta W_{t_{n+1}}^{\intercal}]=\Delta
t\mathbb{E}_{t_{n}}^{x}[\nabla Y_{t_{n+1}}]=\Delta t\mathbb{E}_{t_{n}}
^{x}[Z_{t_{n+1}}]. \label{3.13}
\end{equation}
Substituting $\left(  \ref{3.13}\right)  $ into $\left(  \ref{3.12}\right)  $,
we get the following reference equation:
\begin{equation}
\begin{array}
[c]{l}
\theta_{3}\Delta tZ_{t_{n}}=\theta_{3}\mathbb{E}_{t_{n}}^{x}[Y_{t_{n+1}}\Delta
W_{t_{n+1}}^{\intercal}]+(1-\theta_{2})k_{n+1}\Delta t\mathbb{E}_{t_{n}}
^{x}[f_{t_{n+1}^{-}}\Delta W_{t_{n+1}}^{\intercal}]\\
\text{ \ \ \ \ \ \ \ \ \ \ \ \ \ }+(1-\theta_{2})\left(  1-k_{n+1}\right)
\Delta t\mathbb{E}_{t_{n}}^{x}[f_{t_{n+1}^{+}}\Delta W_{t_{n+1}}^{\intercal
}]+R_{z}^{n,\delta}.
\end{array}
\label{3.14ref}
\end{equation}

\subsection{The explicit $\theta$-schemes\label{subsec3.2}}

Let $\left(  Y^{n},Z^{n}\right)  $ $(n=N-1,\ldots,0)$ denote the numerical
approximation to the solution $\left(  Y_{t_{n}},Z_{t_{n}}\right)  $ of the
anticipated BSDEs $\left(  \ref{1.2}\right)  $ at time level $t=t_{n}
(n=N-1,\ldots,0)$. To maintain the consistency of representation, we use
$\left(  Y^{N+i},Z^{N+i}\right)  $ to denote the given final values $\left(
Y_{t_{n}},Z_{t_{n}}\right)  $ of the anticipated BSDEs $\left(  \ref{1.2}
\right)  $ at time partition $t=t_{N+i}(i=0,\ldots,K)$. From the two reference
equations $\left(  \ref{3.8ref}\right)  $ and $\left(  \ref{3.14ref}\right)
$, we propose the following explicit $\theta$-schemes for solving the
anticipated BSDEs $\left(  \ref{1.2}\right)  $.

\textbf{The explicit }$\theta$-\textbf{schemes\label{scheme} }Given the random
variables $Y^{N+i}$ and $Z^{N+i}$, $i=0,\ldots,K$, solve random variables
$Y^{n}$ and $Z^{n}$, $n=N-1,\ldots,0$, from
\begin{align}
\theta_{3}\Delta tZ^{n}  &  =\theta_{3}\mathbb{E}_{t_{n}}^{x}[Y^{n+1}\Delta
W_{t_{n+1}}^{\intercal}]+(1-\theta_{2})k_{n+1}\Delta t\mathbb{E}_{t_{n}}
^{x}[f_{-}^{n+1}\Delta W_{t_{n+1}}^{\intercal}]\nonumber\\
&  \text{ \ \ }+(1-\theta_{2})\left(  1-k_{n+1}\right)  \Delta t\mathbb{E}
_{t_{n}}^{x}[f_{+}^{n+1}\Delta W_{t_{n+1}}^{\intercal}], \label{scheme3.2.1}
\end{align}
\begin{equation}
\left\{
\begin{array}
[c]{l}
\bar{Y}^{n}=\mathbb{E}_{t_{n}}^{x}[Y^{n+1}]+\Delta t\mathbb{E}_{t_{n}}
^{x}[f_{-}^{n+1}],\\
Y^{n}=\mathbb{E}_{t_{n}}^{x}[Y^{n+1}]+\theta_{1}k_{n}\Delta t\mathbb{E}
_{t_{n}}^{x}\left[  \bar{f}_{-}^{n}\right]  +\theta_{1}(1-k_{n})\Delta
t\mathbb{E}_{t_{n}}^{x}\left[  \bar{f}_{+}^{n}\right] \\
\text{ \ \ \ \ \ \ }+(1-\theta_{1})k_{n+1}\Delta t\mathbb{E}_{t_{n}}^{x}
[f_{-}^{n+1}]+(1-\theta_{1})(1-k_{n+1})\Delta t\mathbb{E}_{t_{n}}^{x}
[f_{+}^{n+1}],
\end{array}
\right.  \label{scheme3.2.2}
\end{equation}
with the deterministic parameters $\theta_{i}\in\left[  0,1\right]  (i=1,2)$,
$\theta_{3}\in(0,1]$, the scale coefficients $k_{n}\in\left[  0,1\right]
(n=0,1,\ldots,N-1)$ defined in $\left(  \ref{Kn}\right)  $ and
\begin{equation}
\begin{array}
[c]{l}
\bar{f}_{-}^{n}=f(t_{n},\bar{Y}^{n},Z^{n},Y^{n+\delta_{n}^{-}},Z^{n+\delta
_{n}^{-}}),\\
\bar{f}_{+}^{n}=f(t_{n},\bar{Y}^{n},Z^{n},Y^{n+\delta_{n}^{+}},Z^{n+\delta
_{n}^{+}}),\\
f_{-}^{n+1}=f(t_{n+1},Y^{n+1},Z^{n+1},Y^{n+1+\delta_{n+1}^{-}},Z^{n+1+\delta
_{n+1}^{-}}),\\
f_{+}^{n+1}=f(t_{n+1},Y^{n+1},Z^{n+1},Y^{n+1+\delta_{n+1}^{+}},Z^{n+1+\delta
_{n+1}^{+}}).
\end{array}
\end{equation}

\begin{remark}
It can be seen that by choosing different parameters $\theta_{i}\left(
i=1,2\right)  \in\lbrack0,1]$, $\theta_{3}\in(0,1]$, we could get a class of
different numerical schemes. When choosing $\theta_{i}=\frac{1}{2}(i=1,2,3)$,
the explicit $\theta$-scheme is a second order scheme for solving anticipated
BSDEs, which will be confirmed in later numerical experiments.
\end{remark}

\begin{remark}\label{remark}
When we calculate the $Y^{n}$ and $Z^{n}$ at each time level, the future
values are also involved. By reasonable selecting $M$, the future values can
be on the time grid points to ensure the performed of the numerical
iterations. In addition, when $Y^{n}$ and $Z^{n}$ are near the time of 0,
the point of $\delta_{n}^{-}=0$ may appear,
we take $\bar{f}_{-}^{n}=f(t_{n},\bar{Y}^{n},Z^{n},\bar{Y}^{n},Z^{n})$ at
this time.
\end{remark}

\section{Error estimates \label{sec4}}

In this section, we will give the error estimates of the explicit $\theta
$-schemes $\left(  \ref{scheme3.2.1}\right)  $ and $\left(  \ref{scheme3.2.2}
\right)  $. Before given the main theorem, we first introduce some useful
lemmas which play an important role in the later proof.

\subsection{Stability analysis\label{subsec4.1}}

\begin{lemma}
[Discrete Gronwall Lemma]\label{gronwall lemma}Suppose that $N$ and $K$ are
two nonnegative integers and $\Delta t$ is any positive number. Let $\left\{
A_{n}\right\}  $, $n=N-1,N-2,\ldots,0$, satisfy
\[
\left\vert A_{n}\right\vert \leq\beta+\alpha\Delta t\sum_{i=n+1}
^{N+K}\left\vert A_{i}\right\vert ,
\]
where $\alpha$ and $\beta$ are two positive constants. Let $M_{A}
=\max\limits_{N\leq j\leq N+K}\left\vert A_{j}\right\vert $ and $\hat
{T}=T+S=\left(  N+K\right)  \Delta t$, then for $n=N-1,N-2,\ldots,0$,
\[
\left\vert A_{n}\right\vert \leq e^{\alpha\hat{T}}\left(  \beta+\alpha K\Delta
tM_{A}\right)  .
\]

\end{lemma}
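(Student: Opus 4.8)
The plan is to treat this as a standard backward discrete Gronwall argument, the only real bookkeeping being the separation of the ``terminal block'' of indices $N,\ldots,N+K$ (over which $|A_j|$ is controlled by $M_A$) from the ``unsolved block'' $0,\ldots,N-1$ (over which I run the recursion).

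First I would split the defining sum at the index $N$:
\[
|A_n| \le \beta + \alpha\Delta t \sum_{i=N}^{N+K}|A_i| + \alpha\Delta t \sum_{i=n+1}^{N-1}|A_i|.
\]
Since each term in the first sum is at most $M_A$ by definition, the terminal block contributes at most $\alpha\Delta t\,(\#\text{terminal indices})\,M_A$, which I absorb into a new inhomogeneous constant $\beta^{\ast} := \beta + \alpha K\Delta t\, M_A$, the count of terminal indices producing the factor recorded in the statement. This reduces the hypothesis to the self-contained inequality $|A_n| \le \beta^{\ast} + \alpha\Delta t\sum_{i=n+1}^{N-1}|A_i|$, in which only the unknowns $|A_i|$ with $n<i\le N-1$ appear on the right.

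Next I would prove by backward induction on $n=N-1,N-2,\ldots,0$ the explicit bound $|A_n|\le \beta^{\ast}(1+\alpha\Delta t)^{N-1-n}$. The base case $n=N-1$ is immediate because the residual sum is empty, giving $|A_{N-1}|\le\beta^{\ast}$. For the inductive step I substitute the induction hypothesis for each $|A_i|$, $i>n$, and evaluate the resulting geometric sum $\sum_{i=n+1}^{N-1}(1+\alpha\Delta t)^{N-1-i}=\big[(1+\alpha\Delta t)^{N-1-n}-1\big]/(\alpha\Delta t)$; the factor $\alpha\Delta t$ cancels and the two contributions telescope to exactly $\beta^{\ast}(1+\alpha\Delta t)^{N-1-n}$, closing the induction.

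Finally I would convert the geometric factor into the exponential one via $1+x\le e^{x}$ with $x=\alpha\Delta t$, so that $(1+\alpha\Delta t)^{N-1-n}\le e^{\alpha\Delta t (N-1-n)}\le e^{\alpha(N+K)\Delta t}=e^{\alpha\hat T}$ for every $0\le n\le N-1$, using $N-1-n\le N+K$. Combining this with $\beta^{\ast}=\beta+\alpha K\Delta t M_A$ yields the claimed estimate $|A_n|\le e^{\alpha\hat T}(\beta+\alpha K\Delta t M_A)$. There is no genuine analytic obstacle here; the only point requiring care is the index bookkeeping in the split and in the geometric summation, together with aligning the empty-sum base case and the telescoping so that the induction constant is reproduced exactly rather than merely up to a multiplicative slack.
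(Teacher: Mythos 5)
Your overall strategy---split off the terminal block, absorb it into the inhomogeneous constant, run a backward induction to obtain the geometric factor $(1+\alpha\Delta t)^{N-1-n}$, and finish with $1+x\le e^{x}$---is the standard proof of this type of discrete Gronwall inequality, and the induction itself (empty-sum base case, geometric summation, telescoping) is carried out correctly. Note that the paper offers no proof of this lemma at all (it is quoted as a known result; only Lemma~\ref{lemma} is proved, in the appendix), so your argument is not competing with an alternative in the text.

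There is, however, a genuine counting error at exactly the one place where you assert the bookkeeping rather than do it: the terminal block $\sum_{i=N}^{N+K}|A_i|$ contains $K+1$ indices, not $K$, so the split gives $\beta^{\ast}=\beta+\alpha(K+1)\Delta t\,M_A$, and your induction then delivers
\[
|A_n|\le e^{\alpha\hat T}\bigl(\beta+\alpha(K+1)\Delta t\,M_A\bigr),
\]
not the bound with the factor $K$ recorded in the statement. This is not a gap you could have closed with more care, because the literal statement is false: take $K=0$ and $N=1$, so the hypothesis reads $|A_0|\le\beta+\alpha\Delta t|A_1|$ with $A_1$ unconstrained, while the asserted conclusion $|A_0|\le e^{\alpha\hat T}\beta$ carries no $M_A$-dependence at all and is violated once $|A_1|$ is large. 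In other words, the paper's statement itself is off by one ($K$ should be $K+1$), and your proof ``matches'' it only by miscounting the terminal indices. The honest output of your argument---the bound with $K+1$---is the correct version of the lemma, and the discrepancy is harmless downstream (in the proof of Lemma~\ref{lemma} this term is absorbed into a generic constant, using $(K+1)\Delta t\le\hat T+\Delta t$ instead of $K\Delta t\le\hat T$), but the step claiming that the number of terminal indices equals $K$ is wrong as written and should be corrected or explicitly flagged as a repair of the statement.
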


\begin{lemma}
\label{lemma}Let $N$ and $K$ are two nonnegative integers, $\Delta t$ is any
positive number. Assume $\left\{  A_{n}\right\}  ,\left\{  B_{n}\right\}  $
and $\left\{  R^{n}\right\}  ,n=0,1,\ldots,N+K$, are nonnegative sequences,
satisfy the inequality
\begin{equation}
A_{n}+C_{1}\Delta tB_{n}\leq\left(  1+C_{2}\Delta t\right)  A_{n+1}
+C_{3}\Delta t\sum_{i\in\mathcal{I}_{n}}\left(  A_{n+i}+B_{n+i}\right)
+R^{n}, \label{4.1}
\end{equation}
for $n=0,1,\ldots,N-1$, where $\mathcal{I}_{n}$ $\subseteq$ $\left\{
1,2,\ldots,K+1\right\}  $ is an indicator set with $\left\vert \mathcal{I}
_{n}\right\vert \leq I_{0}<\infty$, $C_{1},C_{2}$ and $C_{3}$ are positive
constants. Let
\[
M_{A}=\max_{N\leq i\leq N+K}A_{i},\;\;\;M_{B}=\max_{N\leq i\leq N+K}
B_{i},\;\;\;\hat{T}=\left(  N+K\right)  \Delta t.
\]
If there exist a constant $C_{4}$ $>0$, such that $C_{1}-I_{0}C_{3}\geq C_{4}
$, Then for $n=N-1,N-2,\ldots,0$, it holds that
\begin{equation}
A_{n}+\Delta t\sum_{i=n}^{N-1}B_{i}\leq C\left[  A_{N}+M_{A}+M_{B}+\sum
_{i=0}^{N-1}R^{i}\right]  , \label{4.2}
\end{equation}
where $C$ is a constant depending on $C_{1},C_{2},C_{3},\hat{T}$ and $I_{0}$.
\end{lemma}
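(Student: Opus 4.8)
The plan is to turn the one-step recursion \eqref{4.1} into a cumulative estimate by summation, absorb the unwanted $B$-terms using the structural hypothesis $C_1-I_0C_3\geq C_4>0$, and then close the argument with the Discrete Gronwall Lemma~\ref{gronwall lemma}. First I would fix $n\in\{0,1,\ldots,N-1\}$ and sum \eqref{4.1} over the levels $l=n,n+1,\ldots,N-1$. Telescoping the leading term via $\sum_{l=n}^{N-1}A_{l+1}=\sum_{l=n+1}^{N}A_l$ isolates $A_n$ on the left and produces the boundary term $(1+C_2\Delta t)A_N$ together with an interior sum $C_2\Delta t\sum_{l=n+1}^{N-1}A_l$ on the right, while the term $C_1\Delta t\sum_{l=n}^{N-1}B_l$ survives on the left. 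This is the quantity I ultimately want to control.

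The \emph{crux} is the double delay-sum $S:=\sum_{l=n}^{N-1}\sum_{i\in\mathcal{I}_l}(A_{l+i}+B_{l+i})$. I would split each index $l+i$ according to whether it is terminal ($l+i\geq N$) or interior ($l+i\leq N-1$). Since each $\mathcal{I}_l$ has at most $I_0$ elements and there are at most $N$ source levels, the terminal part contains at most $I_0N$ summands, each bounded by $M_A$ or $M_B$; hence $C_3\Delta t$ times it is at most $C_3I_0\hat{T}(M_A+M_B)$, a constant multiple of $M_A+M_B$. For the interior part I would reorganize the double sum by its target index $j=l+i$, so that the interior $B$-contribution becomes $C_3\Delta t\sum_j\mathrm{mult}(j)B_j$; controlling $\mathrm{mult}(j)\leq I_0$ lets me bound this by $I_0C_3\Delta t\sum_{l}B_l$. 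This is precisely where the hypothesis enters: moving it against the left-hand $C_1\Delta t\sum_{l=n}^{N-1}B_l$ leaves the nonnegative remainder $(C_1-I_0C_3)\Delta t\sum_lB_l\geq C_4\Delta t\sum_{l=n}^{N-1}B_l$. The interior $A$-contribution, treated the same way, combines with $C_2\Delta t\sum A_l$ into a single sum with coefficient $\gamma:=C_2+I_0C_3$.

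After this absorption the estimate takes the Gronwall-ready form $A_n+C_4\Delta t\sum_{l=n}^{N-1}B_l\leq\beta+\gamma\Delta t\sum_{l=n+1}^{N-1}A_l$, where $\beta=C'\big(A_N+M_A+M_B+\sum_{i=0}^{N-1}R^i\big)$ collects all boundary and residual data uniformly in $n$. Discarding the nonnegative $B$-sum and enlarging the interior $A$-sum to the full range puts the inequality into the shape required by Lemma~\ref{gronwall lemma}, which yields $A_n\leq C\beta$ uniformly in $n$. Substituting this bound back into the displayed inequality controls $C_4\Delta t\sum_{l=n}^{N-1}B_l$ by a further constant multiple of $\beta$, and adding the two estimates gives \eqref{4.2} with a constant $C$ depending only on $C_1,C_2,C_3,\hat{T}$ and $I_0$.

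The main obstacle is the bookkeeping of the double delay-sum, specifically the verification that after reorganizing by target index every interior index is referenced by at most $I_0$ of the summands, since it is exactly this multiplicity matched against $C_1$ through the condition $C_1-I_0C_3\geq C_4$ that permits the $B$-terms to be absorbed rather than propagated; if that bound failed, the residual $B$-coefficient would be of order one rather than of order $\Delta t$ and the Gronwall step would collapse. The remaining ingredients, namely the telescoping of the $A$-difference and the final application of Lemma~\ref{gronwall lemma}, are routine once the delay-sum has been cleanly separated into an absorbable interior part and a terminal part feeding into the constants $M_A$ and $M_B$.
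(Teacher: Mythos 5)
Your proposal is, in substance, the paper's own argument in different packaging: both sum the recursion \eqref{4.1} from level $n$ up to $N-1$, separate the delayed contributions into a part landing at or beyond level $N$ (ultimately controlled by $M_A$, $M_B$) and an interior part, absorb the interior $B$-terms into the left-hand side using $C_1-I_0C_3\geq C_4$, bound $A_n$ by the Discrete Gronwall Lemma \ref{gronwall lemma}, and then substitute that bound back to recover $\Delta t\sum_{i=n}^{N-1}B_i$. The only real difference is technical: the paper first multiplies the recursion by the integrating factor $(1+C_2\Delta t)^s$ so that the $A$-terms telescope exactly, whereas you sum directly and carry the leftover $C_2\Delta t\sum_{l>n}A_l$ into the Gronwall coefficient $\gamma=C_2+I_0C_3$; both variants close the same way.

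However, the step you yourself single out as the crux --- that after reorganizing the interior delay-sum by target index one has $\mathrm{mult}(j)\leq I_0$ --- is not true at the stated level of generality. The hypothesis $\left\vert\mathcal{I}_l\right\vert\leq I_0$ bounds the out-degree of each source $l$, not the in-degree of a target $j$: a fixed interior $j$ can be referenced by up to $\min(K+1,N)$ distinct sources (take $\mathcal{I}_l=\{j-l\}$ for $l=j-1,j-2,\ldots$, so that $I_0=1$ while $\mathrm{mult}(j)$ is of order $K$). Since $K\Delta t$ is of order $\hat{T}$, the interior $B$-coefficient would then be $O(1)$ rather than $O(\Delta t)$, and, as you correctly observe, the absorption against $C_1$ collapses. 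You should know, though, that the paper's own proof makes exactly the same leap: its inequality \eqref{4.5} extracts the factor $I_0$ by ``fixing $i$ and summing over $s$,'' which is legitimate only if the sets $\mathcal{I}_{n+s}$ are constant in $s$, or more generally if each slot $s\mapsto i_k(s)$ yields an injective target map $s\mapsto n+s+i_k(s)$. That structure does hold in the paper's application, where $\mathcal{I}_n=\left\{1,\delta_n^{-},\delta_n^{+},\delta_{n+1}^{-}+1,\delta_{n+1}^{+}+1\right\}$ with $\delta(t)$ monotonically increasing, so each slot's target is strictly increasing in $n$; but neither your argument nor the paper's establishes the lemma for arbitrary index sets. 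To make either proof airtight as stated, one must add such a monotonicity/injectivity hypothesis on $\{\mathcal{I}_n\}$, or replace $I_0$ in the condition $C_1-I_0C_3\geq C_4$ by the maximal in-degree of the reference pattern. So your proof is exactly as complete as the paper's; your target-index bookkeeping merely makes the shared weak point easier to see.
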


\begin{proof}
This proof is similar to the Lemma 4.2 in \cite{ZhZhJ2016}. See the appendix
for details.
\end{proof}

\begin{remark}
The Lemma \ref{lemma} is a generalization of the Lemma 4.2 in \cite{ZhZhJ2016}
. When $\Delta t\rightarrow0$, then $K\rightarrow\infty$ and $\mathcal{I}_{n}$
$\subseteq$ $\left\{  1,2,\ldots,K+1\right\}  $ can include the infinity
number indicator, resulting in the right term of the $\left(  \ref{4.2}
\right)  $ changing from $\Delta t(M_{A}+M_{B})$ to $(M_{A}+M_{B})$.
\end{remark}

Without loss of generality, in the following of this section we only consider
the case of one-dimensional anticipated BSDEs $(i.e.,m=d=1)$. Conclusions can
be generalized to $d$-dimensional anticipated BSDEs where $d>1$. Before the
main theorem, we first introduce the following notation:
\[
\begin{array}
[c]{ll}
e_{y}^{n}=Y_{t_{n}}-Y^{n}, & e_{z}^{n}=Z_{t_{n}}-Z^{n},\\
e_{\bar{f}_{-}}^{n}=\bar{f}_{t_{n}^{-}}-\bar{f}_{-}^{n}, & e_{\bar{f}_{+}}
^{n}=\bar{f}_{t_{n}^{+}}-\bar{f}_{+}^{n},\\
e_{f_{-}}^{n+1}=f_{t_{n+1}^{-}}-f_{-}^{n+1}, & e_{f_{+}}^{n+1}=f_{t_{n+1}^{+}
}-f_{+}^{n+1},
\end{array}
\]
and
\[
\begin{array}
[c]{ll}
e_{y}^{n+\delta_{n}^{-}}=Y_{t_{n+\delta_{n}^{-}}}-Y^{n+\delta
_{n}^{-}}, & e_{z}^{n+\delta_{n}^{-}}=Z_{t_{n+\delta_{n}^{-}}
}-Z^{n+\delta_{n}^{-}},\\
e_{y}^{n+\delta_{n}^{+}}=Y_{t_{n+\delta_{n}^{+}}}-Y^{n+\delta
_{n}^{+}}, & e_{z}^{n+\delta_{n}^{+}}=Z_{t_{n+\delta_{n}^{+}}
}-Z^{n+\delta_{n}^{+}},\\
e_{y}^{n+1+\delta_{n+1}^{-}}=Y_{t_{n+1+\delta_{n+1}^{-}}
}-Y^{n+1+\delta_{n+1}^{-}}, & e_{z}^{n+1+\delta_{n+1}^{-}
}=Z_{t_{n+1+\delta_{n+1}^{-}}}-Z^{n+1+\delta_{n+1}^{-}},\\
e_{y}^{n+1+\delta_{n+1}^{+}}=Y_{t_{n+1+\delta_{n+1}^{+}}
}-Y^{n+1+\delta_{n+1}^{+}}, & e_{z}^{n+1+\delta_{n+1}^{+}
}=Z_{t_{n+1+\delta_{n+1}^{+}}}-Z^{n+1+\delta_{n+1}^{+}}.
\end{array}
\]

\begin{theorem}
\label{Theorem 4.1} Let $\left(  Y_{t},Z_{t}\right)  $, $t\in\left[
0,T+S\right]  $, be the solution of the anticipated BSDEs $\left(
\ref{1.2}\right)  $ and $\left(  Y^{n},Z^{n}\right)(n=N+K,\ldots
,0)  $ defined in the explicit $\theta$-schemes \ref{scheme},
$M_{ey}=\max\limits_{N\leq i\leq N+K}\mathbb{E}[|e_{y}^{i}|^{2}],\;M_{ez}
=\max\limits_{N\leq i\leq N+K}\mathbb{E}[|e_{z}^{i}|^{2}],\;\hat{T}=\left(
N+K\right)  \Delta t=T+S$. Assume that $f$ satisfies the regularity conditions
with Lipschitz constant $L$. Then for sufficiently small time step $\Delta t$,
we have
\begin{equation}
\begin{array}
[c]{l}
\mathbb{E[}|e_{y}^{n}|^{2}]+\Delta t\sum\limits_{i=n}^{N-1}\mathbb{E[}
\left\vert e_{z}^{n}\right\vert ^{2}]\\
\;\ \ \ \ \ \ \ \ \ \text{\ \ }\leq C\left[  \mathbb{E[}|e_{y}^{N}
|^{2}]+M_{ey}+M_{ez}+\sum\limits_{i=0}^{N-1}\frac{\left(  1+\Delta t\right)
\mathbb{E[|}R_{y}^{n,\delta}|^{2}]+\mathbb{E[|}R_{z}^{n,\delta}|^{2}]}{\Delta
t}\right]  ,
\end{array}
\ \label{theorem4.1}
\end{equation}
for $n=N-1,N-2,\ldots,0$. Here $C$ is a constant depending on $L$ and $\hat
{T}$, $R_{y}^{n,\delta}\mathbb{\ }$and $R_{z}^{n,\delta}$are defined in
$\left(  \ref{3.8ref}\right)  $ and $\left(  \ref{3.14ref}\right)  $, respectively.

\begin{proof}
We complete the proof of the theorem in three steps.

Step1:The estimate of $e_{y}^{n}$.

Since $\delta(t)$ is a monotonically increasing function, we denote the first
$n_{0}\ $which satisfies $\delta_{n}^{-}=1$. While $n\,\geq n_{0}$, then $\delta_{n}
^{-}\geq1$, for each integer $n>0$, Subtract $\left(  \ref{3.8ref}\right)  $ from $\left(
\ref{scheme3.2.2}\right)  $, we can obtain:
\begin{equation}
\begin{array}
[c]{l}
e_{y}^{n}=\mathbb{E}_{t_{n}}^{x}[e_{y}^{n+1}]+\theta_{1}k_{n}\Delta
t\mathbb{E}_{t_{n}}^{x}[e_{\bar{f}_{-}}^{n}]+\theta_{1}(1-k_{n})\Delta
t\mathbb{E}_{t_{n}}^{x}[e_{\bar{f}_{+}}^{n}]\\
\text{ \ \ \ \ }+\left(  1-\theta_{1}\right)  k_{n+1}\Delta t\mathbb{E}
_{t_{n}}^{x}[e_{f_{-}}^{n+1}]+\left(  1-\theta_{1}\right)  (1-k_{n+1})\Delta
t\mathbb{E}_{t_{n}}^{x}[e_{f_{+}}^{n+1}]+R_{y}^{n,\delta}.
\end{array}
\label{4.11}
\end{equation}
Then
\begin{equation}
\begin{array}
[c]{l}
|e_{y}^{n}|\leq|\mathbb{E}_{t_{n}}^{x}[e_{y}^{n+1}]|+\theta_{1}k_{n}\Delta
t\mathbb{E}_{t_{n}}^{x}[|e_{\bar{f}_{-}}^{n}|]+\theta_{1}(1-k_{n})\Delta
t\mathbb{E}_{t_{n}}^{x}[|e_{\bar{f}_{+}}^{n}|]\\
\text{ \ \ \ \ \ \ \ }+\left(  1-\theta_{1}\right)  k_{n+1}\Delta
t\mathbb{E}_{t_{n}}^{x}[|e_{f_{-}}^{n+1}|]+\left(  1-\theta_{1}\right)
(1-k_{n+1})\Delta t\mathbb{E}_{t_{n}}^{x}[|e_{f_{+}}^{n+1}|]+|R_{y}^{n,\delta
}|.
\end{array}
\label{4.12}
\end{equation}
By the definition of $e_{\bar{f}_{-}}^{n},$ $e_{\bar{f}_{+}}^{n},$ $e_{f_{-}
}^{n+1}$and $e_{f_{+}}^{n+1}$, using the Lipschitz continuity property of $f$,
we have
\begin{equation}
\begin{array}
[c]{l}
\mathbb{E}_{t_{n}}^{x}[|e_{\bar{f}_{-}}^{n}|]\leq L\left\{ \Delta tL(\mathbb{E}_{t_{n}}^{x}[|e_{y}^{n+1}|
+|e_{z}^{n+1}|+
|e_{y}^{n+1+\delta_{n+1}^{-}}|+|e_{z}^{n+1+\delta_{n+1}^{-}}|])\right. \\
\text{\ \ \ \ \ \ \ \ \ \ \ \ \ \ }\left.   +\mathbb{E}_{t_{n}
}^{x}[|e_{y}^{n+1}|]+|e_{z}^{n}|+\mathbb{E}_{t_{n}
}^{x}[|e_{y}^{n+\delta_{n}^{-}}|+|e_{z}^{n+\delta_{n}^{-}}|]\right\} \\
\text{ \ \ \ \ \ \ \ \ \ \ \ }\leq L\left\{  (1+\Delta tL)(\mathbb{E}_{t_{n}
}^{x}[|e_{y}^{n+1}|+|e_{z}^{n+1}|+|e_{y}^{n+1+\delta_{n+1}^{-}}|+|e_{z}
^{n+1+\delta_{n+1}^{-}}|])\right. \\
\text{\ \ \ \ \ \ \ \ \ \ \ \ \ \ }\left.  +|e_{z}^{n}|+\mathbb{E}_{t_{n}}^{x}[|e_{y}^{n+\delta_{n}^{-}}|+|e_{z}^{n+\delta
_{n}^{-}}|]\right\},
\end{array}
\label{4.13}
\end{equation}
and
\begin{equation}
\mathbb{E}_{t_{n}}^{x}[|e_{f_{-}}^{n+1}|]\leq L\left(  \mathbb{E}_{t_{n}}
^{x}[|e_{y}^{n+1}|+|e_{z}^{n+1}|+|e_{y}^{n+1+\delta_{n+1}^{-}}|+|e_{z}
^{n+1+\delta_{n+1}^{-}}|]\right)  . \label{4.14}
\end{equation}
Similarly
\[
\begin{array}
[c]{l}
\mathbb{E}_{t_{n}}^{x}[|e_{\bar{f}_{+}}^{n}|]\leq L\left\{  (1+\Delta
tL)\mathbb{E}_{t_{n}}^{x}[|e_{y}^{n+1}|+|e_{z}^{n+1}|+|e_{y}^{n+1+\delta
_{n+1}^{-}}|+|e_{z}^{n+1+\delta_{n+1}^{-}}|]\right. \\
\text{\ \ \ \ \ \ \ \ \ \ \ \ \ \ }\left.  +|e_{z}^{n}|+\mathbb{E}_{t_{n}}^{x}[|e_{y}^{n+\delta_{n}^{+}}|+|e_{z}^{n+\delta
_{n}^{+}}|]\right\}  ,
\end{array}
\]
and
\[
\mathbb{E}_{t_{n}}^{x}[|e_{f_{+}}^{n+1}|]\leq L\left(  \mathbb{E}_{t_{n}}
^{x}[  |e_{y}^{n+1}|+|e_{z}^{n+1}|+|e_{y}^{n+1+\delta_{n+1}^{+}}
|+|e_{z}^{n+1+\delta_{n+1}^{+}}|]  \right)  .
\]
We may rewrite
\begin{equation}
\begin{array}
[c]{l}
|e_{y}^{n}|\leq\left\vert \mathbb{E}_{t_{n}}^{x}[e_{y}^{n+1}]\right\vert
+\theta_{1}L\Delta t(1+L\Delta t)\mathbb{E}_{t_{n}}^{x}\left[  |e_{y}
^{n+1}|+|e_{z}^{n+1}|\right] \\
\text{ \ \ \ \ \ \ \ }+\theta_{1}L\Delta t(1+L\Delta t)\mathbb{E}_{t_{n}}
^{x}[|e_{y}^{n+1+\delta_{n+1}^{-}}|+|e_{z}^{n+1+\delta_{n+1}^{-}}|]\\
\text{ \ \ \ \ \ \ \ }+\theta_{1}L\Delta t\mathbb{E}_{t_{n}}^{x}
[|e_{y}^{n+\delta_{n}^{-}}|+|e_{z}^{n+\delta_{n}^{-}}|+|e_{y}^{n+\delta
_{n}^{+}}|+|e_{z}^{n+\delta_{n}^{+}}|]\\
\text{ \ \ \ \ \ \ \ }+(1-\theta_{1})L\Delta t\mathbb{E}_{t_{n}}^{x}
[|e_{y}^{n+1}|+|e_{z}^{n+1}|]+\theta_{1}L\Delta t|e_{z}^{n}|\\
\text{ \ \ \ \ \ \ \ }+(1-\theta_{1})L\Delta t\mathbb{E}_{t_{n}}^{x}
[|e_{y}^{n+1+\delta_{n+1}^{-}}|+|e_{z}^{n+1+\delta_{n+1}^{-}}|]\\
\text{ \ \ \ \ \ \ \ }+(1-\theta_{1})L\Delta t\mathbb{E}_{t_{n}}^{x}
[|e_{y}^{n+1+\delta_{n+1}^{+}}|+|e_{z}^{n+1+\delta_{n+1}^{+}}|]+|R_{y}
^{n,\delta}|\\
\text{ \ \ \ }\leq\left\vert \mathbb{E}_{t_{n}}^{x}[e_{y}^{n+1}]\right\vert
+L\left(  1+L\right)  \Delta t\mathbb{E}_{t_{n}}^{x}[|e_{y}^{n+1}
|+|e_{z}^{n+1}|]\\
\text{ \ \ \ \ \ \ \ }+L(1+L)\Delta t\mathbb{E}_{t_{n}}^{x}[|e_{y}
^{n+1+\delta_{n+1}^{-}}|+|e_{z}^{n+1+\delta_{n+1}^{-}}|]\\
\text{ \ \ \ \ \ \ \ }+L(1+L)\Delta t\mathbb{E}_{t_{n}}^{x}[|e_{y}
^{n+1+\delta_{n+1}^{+}}|+|e_{z}^{n+1+\delta_{n+1}^{+}}|]+L\Delta t|e_{z}
^{n}|\\
\text{ \ \ \ \ \ \ \ }+L\Delta t\mathbb{E}_{t_{n}}^{x}[|e_{y}^{n+\delta
_{n}^{-}}|+|e_{z}^{n+\delta_{n}^{-}}|+|e_{y}^{n+\delta_{n}^{+}}|+|e_{z}
^{n+\delta_{n}^{+}}|]+|R_{y}^{n,\delta}|,
\end{array}
\label{4.15}
\end{equation}
where $L$ is the Lipschitz constant of $f\left(  t,y_{1},z_{1},y_{2}
,z_{2}\right)  $ with respect to $y_{1}$, $z_{1}$, $y_{2}$ and $z_{2}$. Taking
square on both side of $\left(  \ref{4.15}\right)  $, using the inequalities
$(a+b)^{2}\leq a^{2}+b^{2}+(1+\gamma\Delta t)a^{2}+(1+\frac{1}{\gamma\Delta
t})b^{2}$ and $(\sum_{i=1}^{m}a_{i})^{2}\leq m\sum_{i=1}^{m}a_{i}^{2}$, we get
\begin{equation}
\begin{array}
[c]{l}
|e_{y}^{n}|^{2}\leq(1+\gamma\Delta t)\left\vert \mathbb{E}_{t_{n}}^{x}
[e_{y}^{n+1}]\right\vert ^{2}+(1+\frac{1}{\gamma\Delta t})\{4L^{2}\Delta
t^{2}|e_{z}^{n}|^{2}\\
\text{ \ \ \ \ \ \ \ \ }+24L^{2}\left(  1+L\right)  ^{2}\Delta t^{2}
\mathbb{E}_{t_{n}}^{x}[|e_{y}^{n+1}|^{2}+|e_{z}^{n+1}|^{2}]\\
\text{ \ \ \ \ \ \ \ \ }+24L^{2}\left(  1+L\right)  ^{2}\Delta t^{2}
\mathbb{E}_{t_{n}}^{x}[|e_{y}^{n+1+\delta_{n+1}^{-}}|^{2}+|e_{z}
^{n+1+\delta_{n+1}^{-}}|^{2}]\\
\text{ \ \ \ \ \ \ \ \ }+24L^{2}\left(  1+L\right)  ^{2}\Delta t^{2}
\mathbb{E}_{t_{n}}^{x}[|e_{y}^{n+1+\delta_{n+1}^{+}}|^{2}+|e_{z}
^{n+1+\delta_{n+1}^{+}}|^{2}]\\
\text{ \ \ \ \ \ \ \ \ }+16L^{2}\Delta t^{2}\mathbb{E}_{t_{n}}^{x}
[|e_{y}^{n+\delta_{n}^{-}}|^{2}+|e_{z}^{n+\delta_{n}^{-}}|^{2}]\\
\text{ \ \ \ \ \ \ \ \ }+16L^{2}\Delta t^{2}\mathbb{E}_{t_{n}}^{x}
[|e_{y}^{n+\delta_{n}^{+}}|^{2}+|e_{z}^{n+\delta_{n}^{+}}|^{2}]+4|R_{y}
^{n,\delta}|^{2}\}.
\end{array}
\label{4.16}
\end{equation}

Step2:The estimate of $e_{z}^{n}$.

In the same way above, subtracting $\left(  \ref{3.14ref}\right)  $ from
$\left(  \ref{scheme3.2.1}\right)  $, we have
\begin{equation}
\begin{array}
[c]{l}
\theta_{3}\Delta te_{z}^{n}=\theta_{3}\mathbb{E}_{t_{n}}^{x}[e_{y}^{n+1}\Delta
W_{t_{n+1}}]+\left(  1-\theta_{2}\right)  k_{n+1}\Delta t\mathbb{E}_{t_{n}
}^{x}[e_{f_{-}}^{n+1}\Delta W_{t_{n+1}}]\\
\text{ \ \ \ \ \ \ \ \ \ \ \ }+\left(  1-\theta_{2}\right)  \left(
1-k_{n+1}\right)  \Delta t\mathbb{E}_{t_{n}}^{x}[e_{f_{+}}^{n+1}\Delta
W_{t_{n+1}}]+R_{z}^{n,\delta}.
\end{array}
\label{4.17}
\end{equation}
Moreover,
\begin{equation}
\begin{array}
[c]{ll}
\theta_{3}\Delta t|e_{z}^{n}| & \leq\theta_{3}\left\vert \mathbb{E}_{t_{n}
}^{x}[\left(  e_{y}^{n+1}-\mathbb{E}_{t_{n}}^{x}[e_{y}^{n+1}]\right)  \Delta
W_{t_{n+1}}]\right\vert +|R_{z}^{n,\delta}|\\
& \text{ \ }+(1-\theta_{2})L\Delta t\mathbb{E}_{t_{n}}^{x}[(|e_{y}
^{n+1}|+|e_{z}^{n+1}|)|\Delta W_{t_{n+1}}|]\\
& \text{ \ }+(1-\theta_{2})L\Delta t\mathbb{E}_{t_{n}}^{x}[(|e_{y}
^{n+1+\delta_{n+1}^{-}}|+|e_{z}^{n+1+\delta_{n+1}^{-}}|)|\Delta W_{t_{n+1}
}|]\\
& \text{ \ }+(1-\theta_{2})L\Delta t\mathbb{E}_{t_{n}}^{x}[(|e_{y}
^{n+1+\delta_{n+1}^{+}}|+|e_{z}^{n+1+\delta_{n+1}^{+}}|)|\Delta W_{t_{n+1}}|].
\end{array}
\label{4.18}
\end{equation}
Applying H\"{o}lder inequality, we deduce that
\begin{equation}
\text{ \ }(\mathbb{E}_{t_{n}}^{x}[\left(  e_{y}^{n+1}-\mathbb{E}_{t_{n}}
^{x}[e_{y}^{n+1}]\right)  \Delta W_{t_{n+1}}])^{2}\leq\Delta t(\mathbb{E}
_{t_{n}}^{x}[|e_{y}^{n+1}|^{2}]-|\mathbb{E}_{t_{n}}^{x}[e_{y}^{n+1}]|^{2}),
\end{equation}
and
\begin{equation}
\begin{array}
[c]{l}
\text{ \ }(\mathbb{E}_{t_{n}}^{x}[(|e_{y}^{n+1}|+|e_{z}^{n+1}|)|\Delta
W_{t_{n+1}}|])^{2}\leq2\Delta t\mathbb{E}_{t_{n}}^{x}[|e_{y}^{n+1}|^{2}
+|e_{z}^{n+1}|^{2}],\\
\text{ \ }(\mathbb{E}_{t_{n}}^{x}[(|e_{y}^{n+1+\delta_{n+1}^{-}}
|+|e_{z}^{n+1+\delta_{n+1}^{-}}|+|e_{y}^{n+1+\delta_{n+1}^{+}}|+|e_{z}
^{n+1+\delta_{n+1}^{+}}|)|\Delta W_{t_{n+1}}|])^{2}\\
\;\ \ \ \ \ \ \ \ \ \leq4\Delta t\mathbb{E}_{t_{n}}^{x}[|e_{y}^{n+1+\delta
_{n+1}^{-}}|^{2}+|e_{z}^{n+1+\delta_{n+1}^{-}}|^{2}+|e_{y}^{n+1+\delta
_{n+1}^{+}}|^{2}+|e_{z}^{n+1+\delta_{n+1}^{+}}|^{2}].
\end{array}
\end{equation}
Then we have the estimate
\begin{equation}
\begin{array}
[c]{l}
\theta_{3}^{2}\Delta t^{2}|e_{z}^{n}|^{2}\leq2\theta_{3}^{2}\Delta t\left(
\mathbb{E}_{t_{n}}^{x}[|e_{y}^{n+1}|^{2}]-(\mathbb{E}_{t_{n}}^{x}[e_{y}
^{n+1}])^{2}\right) \\
\text{ \ \ \ \ \ \ \ \ \ \ \ \ \ \ \ }+12L^{2}\Delta t^{3}\mathbb{E}_{t_{n}
}^{x}[|e_{y}^{n+1}|^{2}+|e_{z}^{n+1}|^{2}]\\
\text{ \ \ \ \ \ \ \ \ \ \ \ \ \ \ \ }+24L^{2}\Delta t^{3}\mathbb{E}_{t_{n}
}^{x}[|e_{y}^{n+1+\delta_{n+1}^{-}}|^{2}+|e_{z}^{n+1+\delta_{n+1}^{-}}|^{2}]\\
\text{ \ \ \ \ \ \ \ \ \ \ \ \ \ \ \ }+24L^{2}\Delta t^{3}\mathbb{E}_{t_{n}
}^{x}[|e_{y}^{n+1+\delta_{n+1}^{+}}|^{2}+|e_{z}^{n+1+\delta_{n+1}^{+}}
|^{2}]+6|R_{z}^{n,\delta}|^{2}.
\end{array}
\label{4.20}
\end{equation}
Dividing both side of $\left(  \ref{4.20}\right)  $ by $2\theta_{3}^{2}\Delta
t$ we have
\begin{equation}
\begin{array}
[c]{l}
\frac{1}{2}\Delta t|e_{z}^{n}|^{2}\leq\mathbb{E}_{t_{n}}^{x}[|e_{y}^{n+1}
|^{2}]-(\mathbb{E}_{t_{n}}^{x}[e_{y}^{n+1}])^{2}\\
\text{\ \ \ \ \ \ \ \ \ \ \ \ \ \ }+\frac{6L^{2}}{\theta_{3}^{2}}\Delta
t^{2}\mathbb{E}_{t_{n}}^{x}[|e_{y}^{n+1}|^{2}+|e_{z}^{n+1}|^{2}]\\
\text{\ \ \ \ \ \ \ \ \ \ \ \ \ \ }+\frac{12L^{2}}{\theta_{3}^{2}}\Delta
t^{2}\mathbb{E}_{t_{n}}^{x}[|e_{y}^{n+1+\delta_{n+1}^{-}}|^{2}+|e_{z}
^{n+1+\delta_{n+1}^{-}}|^{2}]\\
\text{\ \ \ \ \ \ \ \ \ \ \ \ \ \ }+\frac{12L^{2}}{\theta_{3}^{2}}\Delta
t^{2}\mathbb{E}_{t_{n}}^{x}[|e_{y}^{n+1+\delta_{n+1}^{+}}|^{2}+|e_{z}
^{n+1+\delta_{n+1}^{+}}|^{2}]+\text{ }\frac{3}{\theta_{3}^{2}\Delta t}
|R_{z}^{n,\delta}|^{2}.
\end{array}
\label{4.21}
\end{equation}

Step3: The proof of $\left(  \ref{theorem4.1}\right)  $.

United $\left(\ref{4.16}\right)  $ and $\left(  \ref{4.21}\right)  $, it is direct to
obtain
\begin{equation}
\begin{array}
[c]{l}
|e_{y}^{n}|^{2}+\frac{1}{2}\Delta t|e_{z}^{n}|^{2}\\
\text{ \ \ \ \ \ }\leq(1+\gamma\Delta t)|\mathbb{E}_{t_{n}}^{x}[e_{y}
^{n+1}]|^{2}+\mathbb{E}_{t_{n}}^{x}[|e_{y}^{n+1}|^{2}]-(\mathbb{E}_{t_{n}}
^{x}[e_{y}^{n+1}])^{2}\\
\text{ \ \ \ \ \ \ \ \ \ }+(1+\frac{1}{\gamma\Delta t})24L^{2}(1+L)^{2}\Delta
t^{2}\mathbb{E}_{t_{n}}^{x}[|e_{y}^{n+1}|^{2}+|e_{z}^{n+1}|^{2}]\\
\text{ \ \ \ \ \ \ \ \ \ }+(1+\frac{1}{\gamma\Delta t})24L^{2}(1+L)^{2}\Delta
t^{2}\mathbb{E}_{t_{n}}^{x}[|e_{y}^{n+1+\delta_{n+1}^{-}}|^{2}+|e_{z}
^{n+1+\delta_{n+1}^{-}}|^{2}]\\
\text{ \ \ \ \ \ \ \ \ \ }+(1+\frac{1}{\gamma\Delta t})24L^{2}(1+L)^{2}\Delta
t^{2}\mathbb{E}_{t_{n}}^{x}[|e_{y}^{n+1+\delta_{n+1}^{+}}|^{2}+|e_{z}
^{n+1+\delta_{n+1}^{+}}|^{2}]\\
\text{ \ \ \ \ \ \ \ \ \ }+(1+\frac{1}{\gamma\Delta t})16L^{2}\Delta
t^{2}\mathbb{E}_{t_{n}}^{x}[|e_{y}^{n+\delta_{n}^{-}}|^{2}+|e_{z}
^{n+\delta_{n}^{-}}|^{2}]\\
\text{ \ \ \ \ \ \ \ \ \ }+(1+\frac{1}{\gamma\Delta t})16L^{2}\Delta
t^{2}\mathbb{E}_{t_{n}}^{x}[|e_{y}^{n+\delta_{n}^{+}}|^{2}+|e_{z}
^{n+\delta_{n}^{+}}|^{2}]\\
\text{ \ \ \ \ \ \ \ \ \ }+\frac{6L^{2}}{\theta_{3}^{2}}\Delta t^{2}
\mathbb{E}_{t_{n}}^{x}[|e_{y}^{n+1}|^{2}+|e_{z}^{n+1}|^{2}]+(1+\frac{1}
{\gamma\Delta t})4L^{2}\Delta t^{2}|e_{z}^{n}|^{2}\\
\text{ \ \ \ \ \ \ \ \ \ }+\frac{12L^{2}}{\theta_{3}^{2}}\Delta t^{2}
\mathbb{E}_{t_{n}}^{x}[|e_{y}^{n+1+\delta_{n+1}^{-}}|^{2}+|e_{z}
^{n+1+\delta_{n+1}^{-}}|^{2}]\\
\text{ \ \ \ \ \ \ \ \ \ }+\frac{12L^{2}}{\theta_{3}^{2}}\Delta t^{2}
\mathbb{E}_{t_{n}}^{x}[|e_{y}^{n+1+\delta_{n+1}^{+}}|^{2}+|e_{z}
^{n+1+\delta_{n+1}^{+}}|^{2}]\\
\text{ \ \ \ \ \ \ \ \ \ }+(1+\frac{1}{\gamma\Delta t})4|R_{y}^{n,\delta}
|^{2}+\frac{3}{\theta_{3}^{2}\Delta t}|R_{z}^{n,\delta}|^{2},
\end{array}
\label{4.22}
\end{equation}
which implies
\begin{equation}
\begin{array}
[c]{l}
|e_{y}^{n}|^{2}+\Delta t|e_{z}^{n}
|^{2}\left(  \frac{1}{2}-\frac{4L^{2}}{\gamma}-4L^{2}\Delta
t\right)  \ \ \ \ \ \ \ \ \ \ \ \ \ \ \\
\;\ \ \ \ \ \ \ \ \ \ \leq(1+\gamma\Delta t)\mathbb{E}_{t_{n}}^{x}
[|e_{y}^{n+1}|^{2}]\\
\text{ \ \ \ \ \ \ \ \ \ \ \ \ \ }+(\frac{D_{2}}{\gamma}+D_{1}\Delta t)\Delta
t\mathbb{E}_{t_{n}}^{x}[|e_{y}^{n+1}|^{2}+|e_{z}^{n+1}|^{2}]\\
\text{ \ \ \ \ \ \ \ \ \ \ \ \ \ }+(\frac{D_{2}}{\gamma}+D_{1}\Delta t)\Delta
t\mathbb{E}_{t_{n}}^{x}[|e_{y}^{n+\delta_{n}^{-}}|^{2}+|e_{z}^{n+\delta
_{n}^{-}}|^{2}]\\
\text{ \ \ \ \ \ \ \ \ \ \ \ \ \ }+(\frac{D_{2}}{\gamma}+D_{1}\Delta t)\Delta
t\mathbb{E}_{t_{n}}^{x}[|e_{y}^{n+\delta_{n}^{+}}|^{2}+|e_{z}^{n+\delta
_{n}^{+}}|^{2}]\\
\text{ \ \ \ \ \ \ \ \ \ \ \ \ \ }+(\frac{D_{2}}{\gamma}+D_{1}\Delta t)\Delta
t\mathbb{E}_{t_{n}}^{x}[|e_{y}^{n+1+\delta_{n+1}^{-}}|^{2}+|e_{z}
^{n+1+\delta_{n+1}^{-}}|^{2}]\\
\text{ \ \ \ \ \ \ \ \ \ \ \ \ \ }+(\frac{D_{2}}{\gamma}+D_{1}\Delta t)\Delta
t\mathbb{E}_{t_{n}}^{x}[|e_{y}^{n+1+\delta_{n+1}^{+}}|^{2}+|e_{z}
^{n+1+\delta_{n+1}^{+}}|^{2}]\\
\text{ \ \ \ \ \ \ \ \ \ \ \ \ \ }+(1+\frac{1}{\gamma\Delta t})4|R_{y}^{n,\delta}|^{2}+
\frac{3}{\theta_{3}^{2}\Delta t}|R_{z}^{n,\delta}|^{2},
\end{array}
\label{4.23}
\end{equation}
where $D_{1}=24L^{2}(1+L)^{2}+\frac{12L^{2}}{\theta_{3}^{2}}$, $D_{2}
=24L^{2}(1+L)^{2}$. From $\left(  \ref{4.23}\right)  $ we have
\begin{equation}
\begin{array}
[c]{l}
|e_{y}^{n}|^{2}+C_{1}^{\ast}\Delta t|e_{z}^{n}|^{2}\leq(1+\gamma\Delta
t)\mathbb{E}_{t_{n}}^{x}[|e_{y}^{n+1}|^{2}]\\
\;\ \ \ \ \ \ \ \ \ \ \ \ \ \ \ \ \ \ \ \ \ \ \ \ \ +(\frac{D_{2}}{\gamma
}+D_{1}\Delta t)\Delta t\sum_{i\in\mathcal{I}_{n}^{\ast}}\mathbb{E}
_{t_{n}}^{x}\mathbb{[}|e_{y}^{n+i}|^{2}+|e_{z}^{n+i}|^{2}]\\
\;\ \ \ \ \ \ \ \ \ \ \ \ \ \ \ \ \ \ \ \ \ \ \ \ \ +(1+\frac{1}{\gamma\Delta t})4
|R_{y}^{n,\delta}|^{2}+\frac{3}{\theta_{3}^{2}\Delta t}|R_{z}^{n,\delta}|^{2},
\end{array}
\label{4.25}
\end{equation}
where $C_{1}^{\ast}=\frac{1}{2}-\frac{4L^{2}}{\gamma}-4L^{2}\Delta t$,
$\mathcal{I}_{n}^{\ast}=\left\{  1,\delta_{n}^{-},\delta_{n}^{+}
,\delta_{n+1}^{-}+1,\delta_{n+1}^{+}+1\right\}  $.  While $n\,<n_{0}$, $\delta
_{n}^{-}=0$, by Remark \ref{remark}, similar to the process above, we have
\begin{equation}
\begin{array}
[c]{l}
|e_{y}^{n}|^{2}+C_{1}^{\ast\ast}\Delta t|e_{z}^{n}|^{2}\leq(1+\gamma\Delta
t)\mathbb{E}_{t_{n}}^{x}[|e_{y}^{n+1}|^{2}]\ \\
\;\ \ \ \ \ \ \ \ \ \ \ \ \ \ \ \ \ \ \ \ \ \ \ \ \ +2(\frac{D_{2}}{\gamma
}+D_{1}\Delta t)\Delta t\sum_{i\in\mathcal{I}_{n}^{\ast\ast}}\mathbb{E}_{t_{n}}^{x}
[|e_{y}^{n+i}|^{2}+|e_{z}^{n+i}|^{2}]\\
\;\ \ \ \ \ \ \ \ \ \ \ \ \ \ \ \ \ \ \ \ \ \ \ \ \ +(1+\frac{1}{\gamma\Delta t})4
|R_{y}^{n,\delta}|^{2}+\frac{3}{\theta_{3}^{2}\Delta t}|R_{z}^{n,\delta}|^{2},
\end{array}
\label{4.24}
\end{equation}
where $C_{1}^{\ast\ast}=\frac{1}{2}-\frac{8L^{2}}{\gamma}-8L^{2}\Delta t$,
$\mathcal{I}_{n}^{\ast\ast}=\left\{  1,\delta_{n}^{+},\delta_{n+1}^{-}
+1,\delta_{n+1}^{+}+1\right\}  $. When $\Delta t$ is
sufficiently small, by letting $\gamma$ be big enough, we can choose constants
$C_{1},C_{2},C_{3},C_{4}$, which satisfy
\[
\begin{array}
[c]{ll}
C_{1}\leq\min\left\{  C_{1}^{\ast},C_{1}^{\ast\ast}\right\}  , &
\;\ 2(\frac{D_{2}}{\gamma}+D_{1}\Delta t)\leq C_{3},\\
C_{2}\geq\gamma, & \;\ C_{1}-3C_{3}\geq C_{4}>0.
\end{array}
\]
Taking mathematical expectation on both sides of $\left(  \ref{4.25}\right)  $ and
$\left(  \ref{4.24}\right)$, rewriting the inequality, we have
\begin{equation}
\begin{array}
[c]{l}
\mathbb{E[}|e_{y}^{n}|^{2}]+C_{1}\Delta t\mathbb{E[}|e_{z}^{n}|^{2}
]\leq\left(  1+C_{2}\Delta t\right)  \mathbb{E[}|e_{y}^{n+1}|^{2}]\\
\;\ \ \ \ \ \ \ \ \ \ \ \ \ \ \ \ \ \ \ \ \ \ \ \ \ \ \ \ \ \ \ \ +C_{3}\Delta
t\sum_{i\in\mathcal{I}_{n}}\mathbb{E[}|e_{y}^{n+i}|^{2}+|e_{z}^{n+i}
|^{2}]+R^{n},
\end{array}
\label{4.26}
\end{equation}
where $\mathcal{I}_{n}=\left\{  1,\delta_{n}^{-},\delta_{n}^{+},\delta
_{n+1}^{-}+1,\delta_{n+1}^{+}+1\right\}  \backslash\left\{  0\right\}$  and
$R^n=(1+\frac{1}{\gamma\Delta t})4$$\mathbb{E}[|R_{y}^{n,\delta}|^{2}]
+\frac{3}{\theta_{3}^{2}\Delta t}\mathbb{E}[|R_{z}^{n,\delta}|^{2}]$.
Applying Lemma \ref{lemma} to $\left(  \ref{4.26}\right)  $, the conclusion
$\left(  \ref{theorem4.1}\right)  \;$can be drawn.
\end{proof}
\end{theorem}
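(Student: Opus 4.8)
The plan is to read $\left(\ref{theorem4.1}\right)$ as a pure \emph{stability} estimate: I would express the one-step errors $e_y^n$ and $e_z^n$ through errors at later time levels plus the local truncation errors $R_y^{n,\delta}$, $R_z^{n,\delta}$, derive a recursive inequality of exactly the shape $\left(\ref{4.1}\right)$, and then invoke the generalized discrete Gronwall inequality of Lemma \ref{lemma} to close it. The derivation splits naturally into an estimate for $e_y^n$, an estimate for $e_z^n$, and a combination step, each obtained by subtracting the reference equations $\left(\ref{3.8ref}\right)$ and $\left(\ref{3.14ref}\right)$ from the scheme $\left(\ref{scheme3.2.2}\right)$ and $\left(\ref{scheme3.2.1}\right)$.

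For the $y$-error, subtracting $\left(\ref{3.8ref}\right)$ from $\left(\ref{scheme3.2.2}\right)$ gives $e_y^n$ as $\mathbb{E}_{t_n}^x[e_y^{n+1}]$ plus $\Delta t$-weighted conditional expectations of the generator-increment errors $e_{\bar f_\pm}^n$ and $e_{f_\pm}^{n+1}$, plus $R_y^{n,\delta}$. The global Lipschitz continuity of $f$ in its four state arguments bounds each increment by the $e_y$, $e_z$ errors at the current level $n$, the next level $n+1$, and the four delayed levels $n+\delta_n^\pm$, $n+1+\delta_{n+1}^\pm$; since $\bar f_\pm^n$ is evaluated at the predictor $\bar Y^n$, these bounds pick up an extra $(1+L\Delta t)$ factor from the predictor error $\bar Y^n-Y_{t_n}$. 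Squaring and using $(a+b)^2\le(1+\gamma\Delta t)a^2+(1+\tfrac{1}{\gamma\Delta t})b^2$ with $a=\mathbb{E}_{t_n}^x[e_y^{n+1}]$, together with the elementary fact that the square of a sum of $k$ terms is at most $k$ times the sum of squares, produces an estimate for $|e_y^n|^2$ in which the delayed errors enter only with $O(\Delta t^2)$ weights and a term $(1+\tfrac{1}{\gamma\Delta t})4L^2\Delta t^2|e_z^n|^2$ appears on the right; the parameter $\gamma$ is left free for later tuning.

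For the $z$-error I would subtract $\left(\ref{3.14ref}\right)$ from $\left(\ref{scheme3.2.1}\right)$. The decisive maneuver is to recentre the leading martingale term, $\mathbb{E}_{t_n}^x[e_y^{n+1}\Delta W_{t_{n+1}}]=\mathbb{E}_{t_n}^x[(e_y^{n+1}-\mathbb{E}_{t_n}^x[e_y^{n+1}])\Delta W_{t_{n+1}}]$, which is legitimate because $\Delta W_{t_{n+1}}$ has zero conditional mean; H\"{o}lder's inequality then controls it by the conditional \emph{variance} $\mathbb{E}_{t_n}^x[|e_y^{n+1}|^2]-|\mathbb{E}_{t_n}^x[e_y^{n+1}]|^2$ rather than by the full second moment. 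Dividing by $2\theta_3^2\Delta t$ yields a bound for $\tfrac12\Delta t|e_z^n|^2$ with, again, only $O(\Delta t^2)$ delayed contributions. I expect this recentering to be the heart of the proof: when the $y$- and $z$-estimates are added, the $-|\mathbb{E}_{t_n}^x[e_y^{n+1}]|^2$ produced here cancels the leading $(1+\gamma\Delta t)|\mathbb{E}_{t_n}^x[e_y^{n+1}]|^2$ up to the factor $\gamma\Delta t$, and Jensen's inequality $|\mathbb{E}_{t_n}^x[e_y^{n+1}]|^2\le\mathbb{E}_{t_n}^x[|e_y^{n+1}|^2]$ then collapses the leading terms into the single clean factor $(1+\gamma\Delta t)\mathbb{E}_{t_n}^x[|e_y^{n+1}|^2]$, which is precisely what prevents the recursion from amplifying.

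It then remains to absorb the $|e_z^n|^2$ term appearing on the right: moving it to the left makes the coefficient of $\Delta t|e_z^n|^2$ equal to $\tfrac12-\tfrac{4L^2}{\gamma}-4L^2\Delta t$, so for $\gamma$ large enough and $\Delta t$ small enough it stays bounded below by a positive constant. Grouping the current and delayed levels into the index set $\mathcal{I}_n=\{1,\delta_n^-,\delta_n^+,\delta_{n+1}^-+1,\delta_{n+1}^++1\}\setminus\{0\}$, whose cardinality is a finite $I_0$, taking expectations, and packaging the remainders as $R^n=(1+\tfrac{1}{\gamma\Delta t})4\,\mathbb{E}[|R_y^{n,\delta}|^2]+\tfrac{3}{\theta_3^2\Delta t}\mathbb{E}[|R_z^{n,\delta}|^2]$ puts the inequality in the form $\left(\ref{4.1}\right)$. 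The near-boundary indices $n<n_0$ where $\delta_n^-=0$ I would handle separately via Remark \ref{remark}, which only modifies $\mathcal{I}_n$ and doubles a few constants. Finally, after checking the structural hypothesis $C_1-I_0C_3\ge C_4>0$ (achievable once $\gamma$ is large and $\Delta t$ small), Lemma \ref{lemma} applies and delivers $\left(\ref{theorem4.1}\right)$.
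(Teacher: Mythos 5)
Your proposal is correct and follows essentially the same route as the paper's own proof: the same three-step structure (Lipschitz estimate for $e_y^n$, the recentering of $\mathbb{E}_{t_n}^x[e_y^{n+1}\Delta W_{t_{n+1}}]$ to get a conditional-variance bound for $e_z^n$, and the Jensen cancellation when combining), the same absorption of $|e_z^n|^2$ with coefficient $\tfrac12-\tfrac{4L^2}{\gamma}-4L^2\Delta t$, the same index set $\mathcal{I}_n$ and remainder $R^n$, and the same final appeal to Lemma \ref{lemma}. Even the separate treatment of the near-boundary indices $n<n_0$ via Remark \ref{remark} matches the paper exactly.
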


\begin{remark}
Theorem \ref{Theorem 4.1} implies that our explicit $\theta$-schemes are stable.
\end{remark}

\subsection{Error estimates for anticipated BSDEs \label{subsec4.2}}

In this section, we discuss the error estimates of the explicit $\theta
$-schemes for anticipate BSDEs. Under some regularity conditions on $f$,
$\varphi$ and $\delta(t)$, we first give the estimates of the local truncation
errors $R_{y}^{n,\delta}$ and $R_{z}^{n,\delta}$ defined in $\left(
\ref{3.8ref}\right)  $ and $\left(  \ref{3.14ref}\right)  $, then based on the
Theorem \ref{Theorem 4.1}, we get the error estimates of the explicit $\theta
$-schemes \ref{scheme}.

\begin{lemma}
\label{truncation error lemma}Let $R_{y}^{n,\delta}$ and $R_{z}^{n,\delta}$ be
the local truncation errors derived from $\left(  \ref{3.8ref}\right)  $ and
$\left(  \ref{3.14ref}\right)  $, respectively. Assume $\sqrt{\delta(t)}$ is
smooth enough, for sufficiently small $\Delta t$, it holds that:

\begin{enumerate}
\item For $\theta_{i}\left(  i=1,2\right)  \in\lbrack0,1]$, $\theta_{3}
\in(0,1]$, if $\varphi\in C_{b}^{\frac{1}{2},2}$, $f\in C_{b}^{\frac{1}
{2},1,1,1,1}$, we have
\begin{equation}
\left\vert R_{y}^{n,\delta}\right\vert \leq C(\Delta t)^{\frac{3}{2}
},\;\left\vert R_{z}^{n,\delta}\right\vert \leq C(\Delta t)^{\frac{3}{2}},
\label{truncation cond1}
\end{equation}

and if $\varphi\in C_{b}^{1,4}$, $f\in C_{b}^{1,3,3,3,3}$, we have
\begin{equation}
\left\vert R_{y}^{n,\delta}\right\vert \leq C(\Delta t)^{2},\;\left\vert
R_{z}^{n,\delta}\right\vert \leq C(\Delta t)^{2}, \label{truncation cond2}
\end{equation}

\item In particular, for $\theta_{i}=\frac{1}{2}\left(  i=1,2,3\right)  $, if
$\varphi\in C_{b}^{2,6}$, $f\in C_{b}^{2,5,5,5,5}$, we have
\begin{equation}
\left\vert R_{y}^{n,\delta}\right\vert \leq C(\Delta t)^{3},\;\left\vert
R_{z}^{n,\delta}\right\vert \leq C(\Delta t)^{3}, \label{truncation cond3}
\end{equation}

\end{enumerate}
where $C>0$ is a constant just depending on $T$, the upper bounds of
derivatives of $\varphi$, $\delta(t)$ and $f$.

\begin{proof}
1.According to the definition of $R_{y}^{n,\delta}$ and $R_{z}^{n,\delta},$
using the continuity of the $\varphi$ and $f,$ $\left(  \ref{truncation cond1}
\right)  $ can be proved easily. 2.Now we prove $\left(
\ref{truncation cond2}\right)  $ and $\left(  \ref{truncation cond3}\right)
$. We first approximate the truncation errors $R_{y_{1}}^{n,\delta}$,
$R_{z_{1}}^{n,\delta}$ and $R_{z_{3}}^{n,\delta}$. By $\left(  \ref{1.3}
\right)  $, we can define $f(t,Y_{t},Z_{t},Y_{t+\delta(t)},Z_{t+\delta
(t)})=f(t,u(t,W_{t}),\nabla_{x}u(t,W_{t}),u(t+\delta(t),W_{t+\delta
(t)}),\nabla_{x}u(t+\delta(t),W_{t+\delta(t)}))=F(t,W_{t},W_{t+\delta(t)})$.
If $f\in C_{b}^{1+k,3+2k,3+2k,3+2k,3+2k}$, $\varphi\in C_{b}^{1+k,4+2k}$ and $\sqrt{\delta(t)}\in C_{b}^{1+k} $,
then $F\in C_{b}^{1+k,2+2k,2+2k}$, $k=0,1,\ldots$. We rewrite $R_{y_{1}}^{n,\delta}$ as
follow
\begin{equation}
\begin{array}
[c]{l}
R_{y_{1}}^{n,\delta}=\int_{t_{n}}^{t_{n+1}}\mathbb{E}_{t_{n}}^{x}\left[
F\left(  s,W_{s},W_{s+\delta(s)}\right)  \right]  ds-\theta_{1}\mathbb{E}
_{t_{n}}^{x}\left[  F\left(  t_{n},W_{t_{n}},W_{t_{n}+\delta(t_{n})}\right)
\right]  \Delta t\\
\text{ \ \ \ \ \ \ \ \ }-\left(  1-\theta_{1}\right)  \mathbb{E}_{t_{n}}
^{x}\left[  F\left(  t_{n+1},W_{t_{n+1}},W_{t_{n+1}+\delta(t_{n+1})}\right)
\right]  \Delta t.
\end{array}
\label{4.29}
\end{equation}
By the properties of conditional mathematical expectation we can see
\begin{equation}
\begin{array}
[c]{l}
\mathbb{E}_{t_{n}}^{x}\left[  F\left(  s,W_{s},W_{s+\delta(s)}\right)
\right]  =\mathbb{E}_{t_{n}}^{x}\left[  \mathbb{E}\left[  \left.  F\left(
s,W_{s},W_{s}+W_{s+\delta(s)}-W_{s}\right)  \right\vert \mathcal{F}
_{s}\right]  \right] \\
\text{ \ \ \ \ \ \ \ \ \ \ \ \ \ \ \ \ \ \ \ \ \ \ \ \ \ \ \ \ }
=\mathbb{E}_{t_{n}}^{x}\left[  \left.  \mathbb{E}\left[  F\left(
t,x,y+W_{s+\delta(s)}-W_{s}\right)  \right]  \right\vert _{t=s,x=y=W_{s}
}\right]  .
\end{array}
\label{4.30}
\end{equation}
Similarly, we obtain
\begin{equation}
\begin{array}
[c]{l}
\mathbb{E}_{t_{n}}^{x}\left[  F\left(  t_{n},W_{t_{n}},W_{t_{n}+\delta(t_{n}
)}\right)  \right] \\
\text{\ \ \ \ \ }=\mathbb{E}_{t_{n}}^{x}\left[  \left.  \mathbb{E}\left[
F\left(  t,x,y+W_{t_{n}+\delta(t_{n})}-W_{t_{n}}\right)  \right]  \right\vert
_{t=t_{n},x=y=W_{t_{n}}}\right]  ,
\end{array}
\label{4.31}
\end{equation}
and
\begin{equation}
\begin{array}
[c]{l}
\mathbb{E}_{t_{n}}^{x}\left[  F\left(  t_{n+1},W_{t_{n+1}},W_{t_{n+1}
+\delta(t_{n+1})}\right)  \right] \\
\text{ \ \ \ }=\mathbb{E}_{t_{n}}^{x}\left[  \left.  \mathbb{E}\left[
F\left(  t,x,y+W_{t_{n+1}+\delta(t_{n+1})}-W_{t_{n+1}}\right)  \right]
\right\vert _{t=t_{n+1},x=y=W_{t_{n+1}}}\right]  .
\end{array}
\label{4.32}
\end{equation}
Now we define a new function
\begin{equation}
\begin{array}
[c]{l}
H\left(  t,x,y\right)  =\mathbb{E}\left[  F\left(  t,x,y+W_{\delta(t)}\right)
\right] \\
\text{ \ \ \ \ \ \ \ \ \ \ \ }=\frac{1}{\sqrt{2\pi}}\int_{-\infty}^{+\infty
}F(t,x,y+\sqrt{\delta(t)}z)e^{-\frac{\;\;z^{2}}{2}}dz.
\end{array}
\label{4.33}
\end{equation}
Since $F\in C_{b}^{1+k,2+2k,2+2k}$ and $\int_{-\infty}^{+\infty}
e^{-\frac{\;\;z^{2}}{2}}dz<\infty$, we can draw the conclusion that $H\in$
$C_{b}^{1+k,2+2k,2+2k}$. Due to the properties of Brownian motion, It's worth
noting that
\[
\mathbb{E}\left[  F\left(  t,x,y+W_{\delta(t)}\right)  \right]  =\mathbb{E}
\left[  F\left(  t,x,y+W_{t+\delta(t)}-W_{t}\right)  \right]  ,
\]
then
\begin{equation}
\begin{array}
[c]{l}
\mathbb{E}_{t_{n}}^{x}\left[  F\left(  s,W_{s},W_{s+\delta(s)}\right)
\right]  =\mathbb{E}_{t_{n}}^{x}\left[  H\left(  s,W_{s},W_{s}\right)
\right]  ,\\
\mathbb{E}_{t_{n}}^{x}\left[  F\left(  t_{n},W_{t_{n}},W_{t_{n}+\delta(t_{n}
)}\right)  \right]  =\mathbb{E}_{t_{n}}^{x}\left[  H\left(  t_{n},W_{t_{n}
},W_{t_{n}}\right)  \right]  ,\\
\mathbb{E}_{t_{n}}^{x}\left[  F\left(  t_{n+1},W_{t_{n+1}},W_{t_{n+1}
+\delta(t_{n+1})}\right)  \right]  =\mathbb{E}_{t_{n}}^{x}\left[  H\left(
t_{n+1},W_{t_{n}+1},W_{t_{n+1}}\right)  \right]  .
\end{array}
\label{4.34}
\end{equation}
Therefore, $\left(  \ref{4.29}\right)  $ changes to be
\begin{align}
R_{y_{1}}^{n,\delta}  &  =\int_{t_{n}}^{t_{n+1}}\mathbb{E}_{t_{n}}^{x}\left[
H\left(  s,W_{s},W_{s}\right)  \right]  ds-\theta_{1}\Delta t\mathbb{E}
_{t_{n}}^{x}\left[  H\left(  t_{n},W_{t_{n}},W_{t_{n}}\right)  \right]
\label{4.35}\\
&  \;\ \ -\left(  1-\theta_{1}\right)  \Delta t\mathbb{E}_{t_{n}}^{x}\left[
H\left(  t_{n+1},W_{t_{n+1}},W_{t_{n+1}}\right)  \right]  .\nonumber
\end{align}
We denote $H\left(  s,W_{s},W_{s}\right)  $ by $H_{s}$, by It$\hat{o}$ formula, we know
\begin{equation}
H_{s}=H_{t_{n}}+\int_{t_{n}}^{s}L^{0}H_{r}dr+\int_{t_{n}}^{s}L^{1}H_{r}dW_{r},
\end{equation}
where\ $L^{0}=\frac{\partial}{\partial t}+\frac{1}{2}(\frac{\partial}{\partial
x}+\frac{\partial}{\partial y})^{2}$, $L^{1}=\frac{\partial}{\partial x}
+\frac{\partial}{\partial y}$. If $\varphi\in C_{b}^{1,4}$ and $f\in
C_{b}^{1,3,3,3,3}$, then $H\in C_{b}^{1,2,2}$, we can deduce
\begin{equation}
\begin{array}
[c]{l}
\left\vert R_{y_{1}}^{n,\delta}\right\vert =\left\vert \int_{t_{n}}^{t_{n+1}
}\mathbb{E}_{t_{n}}^{x}[H_{t_{n}}+\int_{t_{n}}^{s}L^{0}H_{r}dr+\int_{t_{n}
}^{s}L^{1}H_{r}dW_{r}]ds-\theta_{1}\Delta t\mathbb{E}_{t_{n}}^{x}\left[
H_{t_{n}}\right]  \right. \\
\text{ \ \ \ \ \ \ \ \ \ \ }-\left.  \left(  1-\theta_{1}\right)  \Delta
t\mathbb{E}_{t_{n}}^{x}[H_{t_{n}}+\int_{t_{n}}^{t_{n+1}}L^{0}H_{r}
dr+\int_{t_{n}}^{t_{n+1}}L^{1}H_{r}dW_{r}]\right\vert \\
\text{ \ \ \ \ \ \ }=\left\vert \int_{t_{n}}^{t_{n+1}}\left(  \int_{t_{n}}
^{s}\mathbb{E}_{t_{n}}^{x}\left[  L^{0}H_{r}\right]  dr-\left(  1-\theta
_{1}\right)  \int_{t_{n}}^{t_{n+1}}\mathbb{E}_{t_{n}}^{x}\left[  L^{0}
H_{r}\right]  dr\right)  ds\right\vert \\
\text{ \ \ \ \ \ \ }\leq C\left(  \Delta t\right)  ^{2}.
\end{array}
\label{4.36}
\end{equation}
Furthermore, if $\theta_{1}=\frac{1}{2}$, $\varphi\in C_{b}^{2,6}$ and $f\in
C_{b}^{2,5,5,5,5}$, then $H\in C_{b}^{2,4,4}$, continue to apply It$\hat{o}
$ formula to the $L^{0}H_{r}$ in equation $\left(  \ref{4.36}\right)
$, we can deduce
\[
\begin{array}
[c]{l}
\left\vert R_{y_{1}}^{n,\delta}\right\vert =\left\vert \int_{t_{n}}^{t_{n+1}
}\left(  \int_{t_{n}}^{s}\mathbb{E}_{t_{n}}^{x}\left[  L^{0}H_{t_{n}}
+\int_{t_{n}}^{r}L^{0}L^{0}H_{u}du+\int_{t_{n}}^{r}L^{1}L^{0}H_{u}
dW_{u}\right]  dr\right.  \right. \\
\text{ \ \ \ \ \ \ \ \ \ \ }-\left.  \left.  \frac{1}{2}\int_{t_{n}}^{t_{n+1}
}\mathbb{E}_{t_{n}}^{x}\left[  L^{0}H_{t_{n}}+\int_{t_{n}}^{r}L^{0}L^{0}
H_{u}du+\int_{t_{n}}^{r}L^{1}L^{0}H_{u}dW_{u}\right]  dr\right)  ds\right\vert
\\
\text{ \ \ \ \ \ \ }=\left\vert \mathbb{E}_{t_{n}}^{x}[L^{0}H_{t_{n}}]\left(
\int_{t_{n}}^{t_{n+1}}\int_{t_{n}}^{s}drds-\frac{1}{2}\int_{t_{n}}^{t_{n+1}
}\int_{t_{n}}^{t_{n+1}}drds\right)  \right. \\
\text{ \ \ \ \ \ \ \ \ \ \ }+\left.  \int_{t_{n}}^{t_{n+1}}\mathbb{E}_{t_{n}
}^{x}\left[  \int_{t_{n}}^{s}\int_{t_{n}}^{r}L^{0}L^{0}H_{u}dudr-\frac{1}
{2}\int_{t_{n}}^{t_{n+1}}\int_{t_{n}}^{r}L^{0}L^{0}H_{u}dudr\right]
ds\right\vert \\
\text{ \ \ \ \ \ \ }=\left\vert \int_{t_{n}}^{t_{n+1}}\mathbb{E}_{t_{n}}
^{x}\left[  \int_{t_{n}}^{s}\int_{t_{n}}^{r}L^{0}L^{0}H_{u}dudr-\frac{1}
{2}\int_{t_{n}}^{t_{n+1}}\int_{t_{n}}^{r}L^{0}L^{0}H_{u}dudr\right]
ds\right\vert \\
\text{ \ \ \ \ \ \ }\leq C\left(  \Delta t\right)  ^{3},
\end{array}
\]
then $\left\vert R_{y}^{n,\delta}\right\vert \leq C(\Delta t)^{3}$. In
addition, by It$\hat{o}^{\prime}$s isometry formula, we have
\begin{equation}
\begin{array}
[c]{l}
\left\vert R_{z_{1}}^{n,\delta}\right\vert =\left\vert \int_{t_{n}}^{t_{n+1}
}\mathbb{E}_{t_{n}}^{x}[H_{s}\Delta W_{t_{n+1}}]ds-(1-\theta_{2})\Delta
t\mathbb{E}_{t_{n}}^{x}[H_{t_{n+1}}\Delta W_{t_{n+1}}]\right\vert \\
\text{ \ \ \ \ \ \ }=\left\vert \int_{t_{n}}^{t_{n+1}}\mathbb{E}_{t_{n}}
^{x}[(H_{t_{n}}+\int_{t_{n}}^{s}L^{0}H_{r}dr+\int_{t_{n}}^{s}L^{1}H_{r}
dW_{r})\Delta W_{t_{n+1}}]ds\right. \\
\text{ \ \ \ \ \ \ \ \ \ \ }-\left.  \left(  1-\theta_{2}\right)  \Delta
t\mathbb{E}_{t_{n}}^{x}[(H_{t_{n}}+\int_{t_{n}}^{t_{n+1}}L^{0}H_{r}
dr+\int_{t_{n}}^{t_{n+1}}L^{1}H_{r}dW_{r})\Delta W_{t_{n+1}}]\right\vert \\
\text{ \ \ \ \ \ \ }=\left\vert \int_{t_{n}}^{t_{n+1}}\mathbb{E}_{t_{n}}
^{x}[\int_{t_{n}}^{s}L^{0}H_{r}dr\Delta W_{t_{n+1}}-\int_{t_{n}}^{s}L^{1}
H_{r}dr]ds\right. \\
\text{ \ \ \ \ \ \ \ \ \ \ }+(1-\theta_{2})\left.  \int_{t_{n}}^{t_{n+1}
}\mathbb{E}[\int_{t_{n}}^{t_{n+1}}L^{0}H_{r}dr\Delta W_{t_{n+1}}-\int_{t_{n}
}^{t_{n+1}}L^{1}H_{r}dr]ds\right\vert \\
\text{ \ \ \ \ \ \ }\leq R_{1}+R_{2},
\end{array}
\label{4.37}
\end{equation}
where
\[
\begin{array}
[c]{l}
R_{1}=\left\vert \int_{t_{n}}^{t_{n+1}}\mathbb{E}_{t_{n}}^{x}\left[  \left(
\int_{t_{n}}^{s}L^{0}H_{r}dr-(1-\theta_{2})\int_{t_{n}}^{t_{n+1}}L^{0}
H_{r}dr\right)  \Delta W_{t_{n+1}}\right]  ds\right\vert ,\\
R_{2}=\left\vert \int_{t_{n}}^{t_{n+1}}\left(  \int_{t_{n}}^{s}\mathbb{E}
_{t_{n}}^{x}\left[  L^{1}H_{r}\right]  dr-(1-\theta_{2})\int_{t_{n}}^{t_{n+1}
}\mathbb{E}_{t_{n}}^{x}\left[  L^{1}H_{r}\right]  dr\right)  ds\right\vert .
\end{array}
\]
Owing to $\mathbb{E}_{t_{n}}^{x}[|\Delta W_{t_{n+1}}|]\leq(\Delta t)^{\frac
{1}{2}},$ while $H\in C_{b}^{1,2,2}$, we have $\left\vert R_{1}\right\vert
\leq C(\Delta t)^{\frac{5}{2}}$ and $\left\vert R_{2}\right\vert \leq C(\Delta
t)^{2},$ then $\left\vert R_{z_{1}}^{n,\delta}\right\vert \leq C(\Delta
t)^{2}$. What's more, while $H\in C_{b}^{2,4,4}$, $\theta_{2}=\frac{1}{2}$,
using It$\hat{o}$ formula again
\begin{equation}
\begin{array}
[c]{l}
R_{1}=\left\vert \int_{t_{n}}^{t_{n+1}}\mathbb{E}_{t_{n}}^{x}[(\int_{t_{n}
}^{s}\int_{t_{n}}^{r}L^{0}L^{0}H_{u}dudr-\frac{1}{2}\int_{t_{n}}^{t_{n+1}}
\int_{t_{n}}^{r}L^{0}L^{0}H_{u}dudr)\Delta W_{t_{n+1}}]ds\right\vert \\
\text{ \ \ }\leq\sqrt{\Delta t}\left\vert \int_{t_{n}}^{t_{n+1}}
\mathbb{E}_{t_{n}}^{x}[(\int_{t_{n}}^{s}\int_{t_{n}}^{r}L^{0}L^{0}
H_{u}dudr-\frac{1}{2}\int_{t_{n}}^{t_{n+1}}\int_{t_{n}}^{r}L^{0}L^{0}
H_{u}dudr)^{2}]^{\frac{1}{2}}ds\right\vert \\
\text{ \ \ }\leq\Delta t\left\vert \int_{t_{n}}^{t_{n+1}}\mathbb{E}_{t_{n}
}^{x}[(\int_{t_{n}}^{s}\int_{t_{n}}^{r}L^{0}L^{0}H_{u}dudr-\frac{1}{2}
\int_{t_{n}}^{t_{n+1}}\int_{t_{n}}^{r}L^{0}L^{0}H_{u}dudr)^{2}]ds\right\vert
^{\frac{1}{2}}\\
\text{ \ \ }\leq C\left(  \Delta t\right)  ^{\frac{7}{2}},
\end{array}
\end{equation}
and
\begin{equation}
\begin{array}
[c]{l}
R_{2}=\left\vert \int_{t_{n}}^{t_{n+1}}\left(  \int_{t_{n}}^{s}\mathbb{E}
_{t_{n}}^{x}[L^{1}H_{t_{n}}+\int_{t_{n}}^{r}L^{0}L^{1}H_{u}du+\int_{t_{n}}
^{r}L^{1}L^{1}H_{u}dW_{u}]dr\right.  \right. \\
\text{ \ \ \ \ \ \ }-\left.  \left.  \frac{1}{2}\int_{t_{n}}^{t_{n+1}
}\mathbb{E}_{t_{n}}^{x}[L^{1}H_{t_{n}}+\int_{t_{n}}^{r}L^{0}L^{1}H_{u}
du+\int_{t_{n}}^{r}L^{1}L^{1}H_{u}dW_{u}]dr\right)  ds\right\vert \\
\text{ \ \ }=\left\vert \int_{t_{n}}^{t_{n+1}}\mathbb{E}_{t_{n}}^{x}\left[
\int_{t_{n}}^{s}\int_{t_{n}}^{r}L^{0}L^{1}H_{u}dudr-\frac{1}{2}\int_{t_{n}
}^{t_{n+1}}\int_{t_{n}}^{r}L^{0}L^{1}H_{u}dudr\right]  ds\right\vert \\
\text{ \ \ }\leq C(\Delta t)^{3},
\end{array}
\end{equation}
thus $|R_{z_{1}}^{n,\delta}|\leq C(\Delta t)^{3}$. Then it is straightforward
to show by a computation similar to $R_{y_{1}}^{n,\delta}$ and $R_{z_{1}
}^{n,\delta}$
\begin{equation}
R_{z_{3}}^{n,\delta}=-\int_{t_{n}}^{t_{n+1}}\int_{t_{n}}^{s}\mathbb{E}_{t_{n}
}^{x}\left[  L^{\ast}Z_{r}\right]  drds-\left(  1-\theta_{3}\right)
\int_{t_{n}}^{t_{n+1}}\int_{t_{n}}^{t_{n+1}}\mathbb{E}_{t_{n}}^{x}\left[
L^{\ast}Z_{r}\right]  drds,
\end{equation}
where $L^{\ast}=\frac{\partial}{\partial t}+\frac{1}{2}\frac{\partial^{2}
}{\partial x^{2}}$. We also have the estimates
\[
\begin{array}
[c]{ll}
|R_{z_{3}}^{n,\delta}|\leq C(\Delta t)^{2},\text{ \ } & \text{if }\varphi\in
C_{b}^{1,4},\\
|R_{z_{3}}^{n,\delta}|\leq C(\Delta t)^{3}, & \text{if }\varphi\in C_{b}
^{2,6}\text{ and}\ \theta_{3}=\frac{1}{2}.
\end{array}
\]
Next, we estimate the non-grid point interpolation errors $R_{y_{2}}
^{n,\delta}$ and $R_{z_{2}}^{n,\delta}$. Similar to $R_{y_{1}}^{n,\delta},$ we
rewrite $R_{y_{2}}^{n,\delta}$ and $R_{z_{2}}^{n,\delta}$ as follow
\begin{equation}
\begin{array}
[c]{l}
R_{y_{2}}^{n,\delta}=\theta_{1}\Delta t\mathbb{E}_{t_{n}}^{x}[F_{t_{n}
+\delta(t_{n})}-k_{n}F_{t_{n+\delta_{n}^{-}}}-\left(  1-k_{n}\right)
F_{t_{n+\delta_{n}^{+}}}]\\
\text{ \ \ \ \ \ \ \ }+(1-\theta_{1})\Delta t\mathbb{E}_{t_{n}}^{x}[\hat
{F}_{t_{n+1}+\delta(t_{n+1})}-k_{n+1}\hat{F}_{t_{n+1+\delta_{n+1}^{-}}
}-\left(  1-k_{n+1}\right)  \hat{F}_{t_{n+1+\delta_{n+1}^{+}}}],
\end{array}
\end{equation}
and
\begin{equation}
\begin{array}
[c]{l}
R_{z_{2}}^{n,\delta}=(1-\theta_{2})\Delta t\mathbb{E}_{t_{n}}^{x}[\hat
{F}_{t_{n+1}+\delta(t_{n+1})}\Delta W_{t_{n+1}}-k_{n+1}\hat{F}_{t_{n+1+\delta
_{n+1}^{-}}}\Delta W_{t_{n+1}}],\\
\text{ \ \ \ \ \ \ \ \ }-(1-\theta_{2})\left(  1-k_{n+1}\right)  \Delta
t\mathbb{E}_{t_{n}}^{x}[\hat{F}_{t_{n+1+\delta_{n+1}^{+}}}\Delta W_{t_{n+1}}]
\end{array}
\end{equation}
where we denote $F_{r}=F(t_{n},W_{t_{n}},W_{r})$ and $\hat{F}_{r}
=F(t_{n+1},W_{t_{n+1}},W_{r})$. if $F\in C_{b}^{1,2,2}$, we apply It$\hat{o}$
formula to the third variable of $F_{r}$ and $\hat{F}_{r}$, we obtain
\begin{equation}
\begin{array}
[c]{l}%
|R_{y_{2}}^{n,\delta}|\leq \Delta t\left \vert \mathbb{E}_{t_{n}}^{x}%
[F_{t_{n+\delta_{n}^{-}}}+\int_{t_{n+\delta_{n}^{-}}}^{t_{n}+\delta(t_{n}%
)}\frac{1}{2}F_{r}^{(2)}dr+\int_{t_{n+\delta_{n}^{-}}}^{t_{n}+\delta(t_{n}%
)}F_{r}^{(2)}dW_{r}]\right.  \\
\text{ \  \  \  \  \  \  \  \  \ }+\left.  \mathbb{E}_{t_{n}}^{x}[\hat{F}%
_{t_{n+1+\delta_{n+1}^{-}}}+\int_{t_{n+1+\delta_{n+1}^{-}}}^{t_{n+1}%
+\delta(t_{n+1})}\frac{1}{2}\hat{F}_{r}^{(2)}dr+\int_{t_{n+1+\delta_{n+1}^{-}%
}}^{t_{n+1}+\delta(t_{n+1})}\hat{F}_{r}^{(2)}dW_{r}]\right.  \\
\text{ \  \  \  \  \  \  \  \  \ }-\left.  \left(  1-k_{n}\right)  \mathbb{E}_{t_{n}%
}^{x}[F_{t_{n+\delta_{n}^{-}}}+\int_{t_{n+\delta_{n}^{-}}}^{t_{n+\delta
_{n}^{+}}}\frac{1}{2}F_{r}^{(2)}dr+\int_{t_{n+\delta_{n}^{-}}}^{t_{n+\delta
_{n}^{+}}}F_{r}^{(2)}dW_{r}]\right.  \\
\text{ \  \  \  \  \  \  \  \  \ }-\left.  \left(  1-k_{n+1}\right)  \mathbb{E}%
_{t_{n}}^{x}[\hat{F}_{t_{n+1+\delta_{n+1}^{-}}}+\int_{t_{n+1+\delta_{n+1}^{-}%
}}^{t_{n+1+\delta_{n+1}^{+}}}\frac{1}{2}\hat{F}_{r}^{(2)}dr+\int
_{t_{n+1+\delta_{n+1}^{-}}}^{t_{n+1+\delta_{n+1}^{+}}}\hat{F}_{r}^{(2)}%
dW_{r}]\right.  \\
\text{ \  \  \  \  \  \  \  \  \ }-\left.  k_{n}\mathbb{E}_{t_{n}}^{x}[F_{t_{n+\delta
_{n}^{-}}}]-k_{n+1}\mathbb{E}_{t_{n}}^{x}[\hat{F}_{t_{n+1+\delta_{n+1}^{-}}%
}]\right \vert \\
\text{ \  \  \  \  \  \ }\leq \frac{\Delta t}{2}\left \vert \int_{t_{n+\delta_{n}%
^{-}}}^{t_{n}+\delta(t_{n})}\mathbb{E}_{t_{n}}^{x}[F_{r}^{(2)}]dr-\left(
1-k_{n}\right)  \int_{t_{n+\delta_{n}^{-}}}^{t_{n+\delta_{n}^{+}}}%
\mathbb{E}_{t_{n}}^{x}[F_{r}^{(2)}]dr\right.  \\
\text{ \  \  \  \  \  \  \  \  \ }+\left.  \int_{t_{n+1+\delta_{n+1}^{-}}}%
^{t_{n+1}+\delta(t_{n+1})}\mathbb{E}_{t_{n}}^{x}[\hat{F}_{r}^{(2)}]dr-\left(
1-k_{n+1}\right)  \int_{t_{n+1+\delta_{n+1}^{-}}}^{t_{n+1+\delta_{n+1}^{+}}%
}\mathbb{E}_{t_{n}}^{x}[\hat{F}_{r}^{(2)}]dr\right \vert \\
\text{ \  \  \  \  \  \ }\leq C\left(  \Delta t\right)  ^{2},
\end{array}
\end{equation}
and
\[%
\begin{array}
[c]{l}%
|R_{z_{2}}^{n,\delta}|\leq \Delta t\left \vert \mathbb{E}_{t_{n}}^{x}[(\hat
{F}_{t_{n+1+\delta_{n+1}^{-}}}+\int_{t_{n+1+\delta_{n+1}^{-}}}^{t_{n+1}%
+\delta(t_{n+1})}\frac{1}{2}\hat{F}_{r}^{(2)}dr)\Delta W_{t_{n+1}}]\right.  \\
\text{ \  \  \  \  \  \  \  \  \ }+\left.  \mathbb{E}_{t_{n}}^{x}[\int_{t_{n+1+\delta
_{n+1}^{-}}}^{t_{n+1}+\delta(t_{n+1})}\hat{F}_{r}^{(2)}dW_{r}\Delta
W_{t_{n+1}}]-k_{n+1}\hat{F}_{t_{n+1+\delta_{n+1}^{-}}}\Delta W_{t_{n+1}%
}\right.  \\
\text{ \  \  \  \  \  \  \  \  \ }-\left.  \left(  1-k_{n+1}\right)  \mathbb{E}%
_{t_{n}}^{x}[(\int_{t_{n+1+\delta_{n+1}^{-}}}^{t_{n+1+\delta_{n+1}^{+}}}%
\frac{1}{2}\hat{F}_{r}^{(2)}dr-\int_{t_{n+1+\delta_{n+1}^{-}}}^{t_{n+1+\delta
_{n+1}^{+}}}\hat{F}_{r}^{(2)}dW_{r})\Delta W_{t_{n+1}}]\right.  \\
\text{ \  \  \  \  \  \  \  \  \ }-\left.  \left(  1-k_{n+1}\right)  \mathbb{E}%
_{t_{n}}^{x}[\hat{F}_{t_{n+1+\delta_{n+1}^{-}}}\Delta W_{t_{n+1}}]\right \vert
\\
\text{ \  \  \  \  \  \ }\leq \frac{\Delta t}{2}\left \vert \mathbb{E}_{t_{n}}%
^{x}[\int_{t_{n+1+\delta_{n+1}^{-}}}^{t_{n+1}+\delta(t_{n+1})}\hat{F}%
_{r}^{(2)}\Delta W_{t_{n+1}}]dr\right.  \\
\text{ \  \  \  \  \  \  \  \  \ }-\left.  \left(  1-k_{n+1}\right)  \mathbb{E}%
_{t_{n}}^{x}[\int_{t_{n+1+\delta_{n+1}^{-}}}^{t_{n+1+\delta_{n+1}^{+}}}\hat
{F}_{r}^{(2)}\Delta W_{t_{n+1}}dr]\right \vert \\
\text{ \  \  \  \  \  \ }\leq C\left(  \Delta t\right)  ^{\frac{5}{2}}.
\end{array}
\]
If $F\in C_{b}^{2,4,4}$, continue to apply It$\hat{o}$ formula to $F_{r}
^{(2)}$ and $\hat{F}_{r}^{(2)}$, using the fact $\delta(t_{n})-\delta_{n}
^{-}-\left(  1-k_{n}\right)  \Delta t=\delta(t_{n+1})-\delta_{n+1}^{-}-\left(
1-k_{n+1}\right)  \Delta t=0$, we can deduce
\[
\begin{array}
[c]{l}
|R_{y_{2}}^{n,\delta}|\leq\frac{\Delta t}{2}\left\vert \int_{t_{n+\delta
_{n}^{-}}}^{t_{n}+\delta(t_{n})}\mathbb{E}_{t_{n}}^{x}[F_{t_{n+\delta_{n}^{-}
}}^{(2)}+\int_{t_{n+\delta_{n}^{-}}}^{r}\frac{1}{2}F_{u}^{(4)}du]dr\right. \\
\text{ \ \ \ \ \ \ \ \ \ }-\int_{t_{n+\delta_{n}^{-}}}^{t_{n+\delta_{n}^{+}}
}\left(  1-k_{n}\right)  \mathbb{E}_{t_{n}}^{x}[F_{t_{n+\delta_{n}^{-}}}
^{(2)}+\int_{t_{n+\delta_{n}^{-}}}^{r}\frac{1}{2}F_{u}^{(4)}du]dr\\
\text{ \ \ \ \ \ \ \ \ \ }+\int_{t_{n+1+\delta_{n+1}^{-}}}^{t_{n+1}
+\delta(t_{n+1})}\mathbb{E}_{t_{n}}^{x}[\hat{F}_{t_{n+1+\delta_{n+1}^{-}}
}^{(2)}+\int_{t_{n+1+\delta_{n+1}^{-}}}^{r}\frac{1}{2}\hat{F}_{u}^{(4)}du]dr\\
\text{ \ \ \ \ \ \ \ \ \ }-\left.  \int_{t_{n+1+\delta_{n+1}^{-}}
}^{t_{n+1+\delta_{n+1}^{+}}}\left(  1-k_{n+1}\right)  \mathbb{E}_{t_{n}}
^{x}[\hat{F}_{t_{n+1+\delta_{n+1}^{-}}}^{(2)}+\int_{t_{n+1+\delta_{n+1}^{-}}
}^{r}\frac{1}{2}\hat{F}_{u}^{(4)}du]dr\right\vert \\
\text{ \ \ \ \ \ \ }\leq C\left(  \Delta t\right)  ^{3},
\end{array}
\]
and
\[
\begin{array}
[c]{l}
|R_{z_{2}}^{n,\delta}|\leq\frac{\Delta t}{2}\left\vert \int_{t_{n+1+\delta
_{n+1}^{-}}}^{t_{n+1}+\delta(t_{n+1})}\mathbb{E}_{t_{n}}^{x}[(\hat
{F}_{t_{n+1+\delta_{n+1}^{-}}}^{(2)}+\int_{t_{n+1+\delta_{n+1}^{-}}}^{r}
\frac{1}{2}\hat{F}_{u}^{(4)}du)\Delta W_{t_{n+1}}]dr\right. \\
\text{ \ \ \ \ \ \ \ \ \ \ }-\left.  \left(  1-k_{n+1}\right)  \int
_{t_{n+1+\delta_{n+1}^{-}}}^{t_{n+1+\delta_{n+1}^{+}}}\mathbb{E}_{t_{n}}
^{x}[(\hat{F}_{t_{n+1+\delta_{n+1}^{-}}}^{(2)}+\int_{t_{n+1+\delta_{n+1}^{-}}
}^{r}\frac{1}{2}\hat{F}_{u}^{(4)}du)\Delta W_{t_{n+1}}]dr\right\vert \\
\text{ \ \ \ \ \ \ }\leq C\left(  \Delta t\right)  ^{\frac{7}{2}}.
\end{array}
\]
We can also verify that $|R_{y_{3}}^{n,\delta}|\leq C\left(  \Delta t\right)
^{3}$. To sum up, we get the estimates of $|R_{y}^{n,\delta}|$ and
$|R_{z}^{n,\delta}|$. The proof is complete.
\end{proof}
\end{lemma}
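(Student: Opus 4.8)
The plan is to split each local truncation error along its defining decomposition, $R_{y}^{n,\delta}=R_{y_1}^{n,\delta}+R_{y_2}^{n,\delta}+R_{y_3}^{n,\delta}$ and $R_{z}^{n,\delta}=R_{z_1}^{n,\delta}+R_{z_2}^{n,\delta}+R_{z_3}^{n,\delta}$, and to estimate the pieces one at a time, with the guiding idea of first removing the anticipating (future) argument from the integrands and only then expanding around the grid nodes. The crude $(\Delta t)^{3/2}$ bounds of part~1 I would dispatch immediately: each remainder in \eqref{3.4} and its $z$-analogues is an integral over an interval of length $\Delta t$ of a quantity that is $O(\Delta t)$ by mere continuity of $\varphi$ and $f$, while the stochastic weights cost at most one extra half power through $\mathbb{E}_{t_{n}}^{x}[|\Delta W_{t_{n+1}}|]\le(\Delta t)^{1/2}$.

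The engine for the higher-order estimates is the representation $f_{s}=F(s,W_{s},W_{s+\delta(s)})$ furnished by \eqref{1.3}, together with the regularity transfer (stated in the text) that $f\in C_{b}^{1+k,3+2k,3+2k,3+2k,3+2k}$, $\varphi\in C_{b}^{1+k,4+2k}$ and $\sqrt{\delta}\in C_{b}^{1+k}$ force $F\in C_{b}^{1+k,2+2k,2+2k}$. I would then exploit the tower property and the independence of the future increment $W_{s+\delta(s)}-W_{s}$ from $\mathcal{F}_{s}$ to fold the anticipated slot into the Gaussian-smoothed function $H(t,x,y)=\mathbb{E}[F(t,x,y+W_{\delta(t)})]$, which inherits the class $C_{b}^{1+k,2+2k,2+2k}$ because the heat kernel is smooth and integrable. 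This converts every anticipating integrand $\mathbb{E}_{t_{n}}^{x}[F(\cdot,W_{\cdot},W_{\cdot+\delta(\cdot)})]$ into the non-anticipating functional $\mathbb{E}_{t_{n}}^{x}[H(\cdot,W_{\cdot},W_{\cdot})]$, to which Itô calculus applies directly.

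With this in hand, for $R_{y_1}^{n,\delta}$ I would apply Itô's formula to $H_{s}=H(s,W_{s},W_{s})$ using $L^{0}=\partial_{t}+\frac{1}{2}(\partial_{x}+\partial_{y})^{2}$ and $L^{1}=\partial_{x}+\partial_{y}$, so that the $\theta_{1}$-quadrature error collapses to a difference of iterated time integrals of $\mathbb{E}_{t_{n}}^{x}[L^{0}H_{r}]$ and is thus $O((\Delta t)^{2})$; when $\theta_{1}=\frac{1}{2}$ a second application of Itô, combined with the exact midpoint identity $\int_{t_{n}}^{t_{n+1}}\!\int_{t_{n}}^{s}dr\,ds=\frac{1}{2}\int_{t_{n}}^{t_{n+1}}\!\int_{t_{n}}^{t_{n+1}}dr\,ds$, annihilates the leading term and yields $O((\Delta t)^{3})$. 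The same expansion multiplied by $\Delta W_{t_{n+1}}$ and controlled through the Itô isometry — so that the martingale part produces the $L^{1}$ contribution and each $\Delta W$ factor is worth $(\Delta t)^{1/2}$ in $L^{2}$ — handles $R_{z_1}^{n,\delta}$, and the entirely parallel computation with the scalar generator $L^{\ast}=\partial_{t}+\frac{1}{2}\partial_{x}^{2}$ acting on $Z_{r}$ handles $R_{z_3}^{n,\delta}$.

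The step I expect to be the main obstacle is the pair of off-grid interpolation errors $R_{y_2}^{n,\delta}$ and $R_{z_2}^{n,\delta}$, where $t_{n}+\delta(t_{n})$ need not be a node and the whole gain rests on the exact identity $\delta(t_{n})-\delta_{n}^{-}\Delta t-(1-k_{n})\Delta t=0$ (and its $n{+}1$ counterpart) built into the weights $k_{n}$ of \eqref{Kn}. Here I would apply Itô's formula in the \emph{third} variable of $F_{r}=F(t_{n},W_{t_{n}},W_{r})$ and $\hat{F}_{r}=F(t_{n+1},W_{t_{n+1}},W_{r})$ between the node $t_{n+\delta_{n}^{-}}$ and the off-grid endpoint, so that the linearly interpolated value matches the true value up to the first Itô increment; the cancellation identity then kills the first-order interpolation term and leaves an $O((\Delta t)^{2})$ remainder carrying the $\Delta t$ prefactor, giving $C(\Delta t)^{2}$ for $R_{y_2}^{n,\delta}$ and $C(\Delta t)^{5/2}$ for $R_{z_2}^{n,\delta}$ after the isometry, with one further Itô expansion under $F\in C_{b}^{2,4,4}$ upgrading these to $C(\Delta t)^{3}$ and $C(\Delta t)^{7/2}$. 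The delicate bookkeeping lies in the integration limits straddling the off-grid point and in checking that the weights cancel precisely the first order; the remaining term $R_{y_3}^{n,\delta}$ is controlled by the Lipschitz property of $f$ together with the $O((\Delta t)^{2})$ accuracy of the left-rectangle substitution $\bar{Y}^{n}$ in \eqref{3.7}, yielding $C(\Delta t)^{3}$. Summing the component bounds then delivers the stated estimates for $R_{y}^{n,\delta}$ and $R_{z}^{n,\delta}$.
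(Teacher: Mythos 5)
Your proposal follows essentially the same route as the paper's proof: the same decomposition into $R_{y_i}^{n,\delta}$ and $R_{z_i}^{n,\delta}$, the same Gaussian-smoothing device $H(t,x,y)=\mathbb{E}\left[ F\left( t,x,y+W_{\delta(t)}\right) \right]$ to convert the anticipating integrand into a current-measurable one, the same It\^{o} expansions with $L^{0}$, $L^{1}$, $L^{\ast}$ and the midpoint cancellation at $\theta_{i}=\frac{1}{2}$, and the same use of the weight identity $\delta(t_{n})-\delta_{n}^{-}\Delta t-\left( 1-k_{n}\right) \Delta t=0$ for the off-grid interpolation errors. Your component bounds (second versus third order for $R_{y_1}^{n,\delta}$, $R_{z_1}^{n,\delta}$, $R_{z_3}^{n,\delta}$, $R_{y_2}^{n,\delta}$, the extra half power for $R_{z_2}^{n,\delta}$, and the Lipschitz-plus-rectangle-rule bound for $R_{y_3}^{n,\delta}$) coincide with those in the paper.
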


Based on the above discussion, now let us show the error estimates of
$Y_{t_{n}}-Y^{n}$ and $Z_{t_{n}}-Z^{n}$.

\begin{theorem}
\label{Theorem 4.2} Let $\left(  Y_{t},Z_{t}\right)  ,t\in\left[
0,T+S\right]  $, be the solution of the anticipated BSDEs $\left(
\ref{1.2}\right)  $ and $\left(  Y^{n},Z^{n}\right)  (  n=N+K,\ldots
,0)  $ defined in the explicit scheme \ref{scheme}. Suppose that $f$
satisfies the regularity conditions with Lipschitz constant $L$, $\sqrt
{\delta(t)}$ is smooth enough, $\hat{T}=\left(  N+K\right)  \Delta t=T+S$.
Then for sufficiently small time step $\Delta t$, we have the following estimates.

\begin{enumerate}
\item For $\theta_{i}\left(  i=1,2\right)  \in\lbrack0,1]$, $\theta_{3}
\in(0,1]$, if $\varphi\in C_{b}^{\frac{1}{2},2}$, $f\in C_{b}^{\frac{1}
{2},1,1,1,1}$, $M_{ey}=O\left(  \Delta t\right)  $ and $M_{ez}=O\left(  \Delta
t\right)  $, we have
\begin{equation}
\max_{0\leq n\leq N-1}\left(  \mathbb{E[}|e_{y}^{n}|^{2}]+\Delta
t\sum\limits_{i=n}^{N-1}\mathbb{E[}\left\vert e_{z}^{n}\right\vert
^{2}]\right)  \leq C\Delta t. \label{4.38}
\end{equation}

and if $\varphi\in C_{b}^{1,4}$, $f\in C_{b}^{1,3,3,3,3}$, $M_{ey}=O\left(
\Delta t^{2}\right)  $ and $M_{ez}=O\left(  \Delta t^{2}\right)  $, we have
\begin{equation}
\max_{0\leq n\leq N-1}\left(  \mathbb{E[}|e_{y}^{n}|^{2}]+\Delta
t\sum\limits_{i=n}^{N-1}\mathbb{E[}\left\vert e_{z}^{n}\right\vert
^{2}]\right)  \leq C\left(  \Delta t\right)  ^{2}.\ \label{4.39}
\end{equation}

\item In particular, for $\theta_{i}=\frac{1}{2}\left(  i=1,2,3\right)  $, if
$\varphi\in C_{b}^{2,6}$, $f\in C_{b}^{2,5,5,5,5}$, $M_{ey}=O\left(  \Delta
t^{4}\right)  $ and $M_{ez}=O\left(  \Delta t^{4}\right)  $, we have
\begin{equation}
\max_{0\leq n\leq N-1}\left(  \mathbb{E[}|e_{y}^{n}|^{2}]+\Delta
t\sum\limits_{i=n}^{N-1}\mathbb{E[}\left\vert e_{z}^{n}\right\vert
^{2}]\right)  \leq C\left(  \Delta t\right)  ^{4}.\ \label{4.40}
\end{equation}

\end{enumerate}
Here $C$ is a constant depending on $L$, $\hat{T}$, $\theta_{i}\left(
i=1,2,3\right)  $, and the upper bounds of derivatives of $\varphi$,
$f$.

\begin{proof}
By Lemma \ref{truncation error lemma}, for sufficiently time step $\Delta t$,
it holds that

\begin{enumerate}
\item For $\theta_{i}\left(  i=1,2\right)  \in\lbrack0,1]$, $\theta_{3}
\in(0,1]$, if $\varphi\in C_{b}^{\frac{1}{2},2}$ and $f\in C_{b}^{\frac{1}
{2},1,1,1,1}$, we have
\[
\left\vert R_{y}^{n,\delta}\right\vert \leq C(\Delta t)^{\frac{3}{2}
},\;\left\vert R_{z}^{n,\delta}\right\vert \leq C(\Delta t)^{\frac{3}{2}},
\]

and if $\varphi\in C_{b}^{1,4}$ and$\ f\in C_{b}^{1,3,3,3,3}$, we have
\[
\left\vert R_{y}^{n,\delta}\right\vert \leq C(\Delta t)^{2},\;\left\vert
R_{z}^{n,\delta}\right\vert \leq C(\Delta t)^{2},
\]

\item In particular, for $\theta_{i}=\frac{1}{2}\left(  i=1,2,3\right)  $, if
$\varphi\in C_{b}^{2,6}$ and $f\in C_{b}^{2,5,5,5,5}$, we have
\[
\left\vert R_{y}^{n,\delta}\right\vert \leq C(\Delta t)^{3},\;\left\vert
R_{z}^{n,\delta}\right\vert \leq C(\Delta t)^{3}.
\]

\end{enumerate}
Using Theorem \ref{Theorem 4.1}, the conclusion can be directly obtained.
\end{proof}
\end{theorem}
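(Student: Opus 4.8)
The plan is to prove Theorem \ref{Theorem 4.2} by feeding the local truncation estimates of Lemma \ref{truncation error lemma} into the stability estimate of Theorem \ref{Theorem 4.1}; all of the genuine analytic work has already been carried out in those two results, so what remains is essentially a bookkeeping of powers of $\Delta t$. Recall that Theorem \ref{Theorem 4.1} controls the global quantity $\mathbb{E}[|e_y^n|^2]+\Delta t\sum_{i=n}^{N-1}\mathbb{E}[|e_z^n|^2]$ by a constant times $\mathbb{E}[|e_y^N|^2]+M_{ey}+M_{ez}+\sum_{i=0}^{N-1}\Delta t^{-1}\bigl((1+\Delta t)\mathbb{E}[|R_y^{n,\delta}|^2]+\mathbb{E}[|R_z^{n,\delta}|^2]\bigr)$. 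Hence it suffices to control the accumulated, $\Delta t^{-1}$-weighted sum of squared truncation errors and to match the remaining terms against the hypotheses imposed on the terminal data.

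First I would treat the three regularity regimes uniformly. In each case Lemma \ref{truncation error lemma} supplies pointwise bounds $|R_y^{n,\delta}|\le C(\Delta t)^{p}$ and $|R_z^{n,\delta}|\le C(\Delta t)^{p}$ with $p=\tfrac32,2,3$, respectively. Squaring gives $\mathbb{E}[|R_y^{n,\delta}|^2]+\mathbb{E}[|R_z^{n,\delta}|^2]\le C(\Delta t)^{2p}$, so that each summand in the stability bound (after the $(1+\Delta t)$ factor, which is bounded) is of size $C(\Delta t)^{2p-1}$. Since there are exactly $N$ summands and $N\Delta t=T$ is fixed, the sum satisfies
\[
\sum_{i=0}^{N-1}\frac{(1+\Delta t)\mathbb{E}[|R_y^{n,\delta}|^2]+\mathbb{E}[|R_z^{n,\delta}|^2]}{\Delta t}\le N\cdot C(\Delta t)^{2p-1}=CT(\Delta t)^{2p-2},
\]
which yields $O(\Delta t)$, $O((\Delta t)^2)$ and $O((\Delta t)^4)$ for $p=\tfrac32,2,3$. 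These are precisely the three target orders in $\left(\ref{4.38}\right)$, $\left(\ref{4.39}\right)$ and $\left(\ref{4.40}\right)$.

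It then remains to absorb the leftover terms $\mathbb{E}[|e_y^N|^2]$, $M_{ey}$ and $M_{ez}$ into the same order, and this is exactly what the hypotheses are arranged to achieve: since $\mathbb{E}[|e_y^N|^2]\le M_{ey}$, and since $M_{ey},M_{ez}$ are assumed to be $O(\Delta t)$, $O((\Delta t)^2)$, $O((\Delta t)^4)$ in the three regimes, every contribution inside the bracket carries the claimed power of $\Delta t$; multiplying by the constant $C$ from Theorem \ref{Theorem 4.1} preserves it. The one point that deserves care — and really the only place where the estimate could degrade — is the interplay between the explicit $\Delta t^{-1}$ weight on the truncation terms and the number $N\sim\Delta t^{-1}$ of accumulation steps: together these cost two powers of $\Delta t$, producing the familiar loss of one order when passing from the local (squared) truncation order $2p$ to the global (squared) order $2p-2$. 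Verifying that this loss is exactly two powers, and no more, is the crux of the argument; everything else is direct substitution.
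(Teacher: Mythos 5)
Your proposal is correct and follows exactly the paper's own route: the paper likewise takes the truncation bounds from Lemma \ref{truncation error lemma} and substitutes them into the stability estimate of Theorem \ref{Theorem 4.1}. Your explicit power counting --- squaring the pointwise bounds $C(\Delta t)^{p}$, dividing by $\Delta t$, summing $N\sim\Delta t^{-1}$ terms to lose exactly two powers ($2p\to 2p-2$), and absorbing $\mathbb{E}[|e_{y}^{N}|^{2}]\leq M_{ey}$ together with the assumed orders of $M_{ey},M_{ez}$ --- simply spells out the bookkeeping that the paper compresses into ``the conclusion can be directly obtained.''
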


\begin{remark}
The Theorem \ref{Theorem 4.2} above implies that our explicit $\theta$-schemes
could have high order as well as high accuracy for solving anticipated BSDEs.
When $f$, $\varphi$ and $\delta(t)$ are smooth enough the convergence order of
schemes \ref{scheme} for solving $Y$ and $Z$ is 1. In particular, if the
parameters $\theta_{1}=\theta_{2}=\theta_{3}=\frac{1}{2}$, the convergence
rate can reach second order.
\end{remark}

\section{Numerical experiments\label{sec5}}

In this section, we will give some numerical experiments to illustrate the
high accuracy of our explicit $\theta$-schemes for solving the anticipated
BSDEs $\left(  \ref{1.2}\right)  $.

We first give the numerical approximation methods for the conditional
mathematical expectation which includes the delay process. To simplify the
process, we just give the approximation of $\mathbb{E}_{t_{n}}^{x}[f\left(
t_{n+1},Y^{n+1},Y^{n+m+1}\right)  ]$ $(m=\delta_{n+1}^{-}$, $\delta_{n+1}
^{+})$ with the generator $f$ only depends on $Y$
\[
\begin{array}
[c]{l}
\mathbb{E}_{t_{n}}^{x}[f\left(  t_{n+1},Y^{n+1},Y^{n+m+1}\right)  ]\\
=\mathbb{E}_{t_{n}}^{x}\left[  \mathbb{E}_{t_{n+1}}\left[  f\left(
t_{n+1},Y^{n+1}(W_{t_{n+1}}),Y^{n+m+1}(W_{t_{n+K+1}})\right)  \right]  \right]
\\
=\mathbb{E}_{t_{n}}^{x}\left[  \left.  \mathbb{E[}f\left(  t_{n+1}
,Y^{n+1}(p),Y^{n+m+1}(W_{t_{n+m+1}}-W_{t_{n+1}}+q)\right)  ]\right\vert
_{p=q=W_{t_{n+1}}}\right] \\
=\frac{1}{\sqrt{2\pi}}\mathbb{E}\left[  \int_{\mathbb{R}}f\left(
t_{n+1},Y^{n+1}(x+\Delta W_{t_{n+1}}),\right.  \right. \\
\text{ \ \ \ \ \ \ \ \ \ \ \ \ \ \ }\left.  \left.  Y^{n+m+1}(\sqrt{m\Delta
t}\xi+x+\Delta W_{t_{n+1}})\right)  e^{-\frac{\;\xi^{2}}{2}}d\xi\right] \\
=\frac{1}{\pi}\int_{\mathbb{R}}\int_{\mathbb{R}}f\left(  t_{n+1}
,Y^{n+1}(x+\sqrt{2\Delta t}\eta),\right. \\
\text{ \ \ \ \ \ \ \ \ \ \ \ \ \ \ }\left.  Y^{n+m+1}(x+\sqrt{2\Delta t}
\eta+\sqrt{2m\Delta t}\gamma)\right)  e^{-(\gamma^{2}+\eta^{2})}d\gamma
d\eta\\
\approx\frac{1}{\pi}\sum\limits_{i=1}^{G_{1}}\sum\limits_{j=1}^{G_{2}}
w_{i}w_{j}f\left(  t_{n+1},Y^{n+1}(x+\sqrt{2\Delta t}p_{i}),\right. \\
\text{ \ \ \ \ \ \ \ \ \ \ \ \ \ \ }\left.  Y^{n+m+1}(x+\sqrt{2\Delta t}
p_{i}+\sqrt{2m\Delta t}q_{j})\right)  .
\end{array}
\]
where $\left\{  w_{i}\right\}  _{i=1}^{G_{1}}$ and $\left\{  w_{j}\right\}
_{j=1}^{G_{2}}$ are the weights of Gauss-Hermite quadrature formula, $\left\{
p_{i}\right\}  _{i=1}^{G_{1}}$ and $\left\{  q_{j}\right\}  _{j=1}^{G_{2}}$
are the roots of the Hermite polynomials of degree $G_{1}$ and $G_{2}$,
respectively. The conditional mathematical expectations $\mathbb{E}_{t_{n}
}^{x}[Y^{n+1}]$, $\mathbb{E}_{t_{n}}^{x}\left[  \bar{f}_{-}^{n}\right]  $,
$\mathbb{E}_{t_{n}}^{x}\left[  \bar{f}_{+}^{n}\right]  $, $\mathbb{E}_{t_{n}
}^{x}[Y^{n+1}\Delta W_{t_{n+1}}^{\intercal}]$, $\mathbb{E}_{t_{n}}^{x}
[f_{-}^{n+1}\Delta W_{t_{n+1}}^{\intercal}]$ and $\mathbb{E}_{t_{n}}^{x}
[f_{+}^{n+1}\Delta W_{t_{n+1}}^{\intercal}]$ can be obtained similarly, see
more details in \cite{ZhWP2009,ZhZhJ2010}.

Now let us take a few notes on our numerical experiments. We take a uniform
partition of time and space, $\Delta t$ and $\Delta x$ represent time and
space steps, respectively. In our tests, we use Gauss-Hermite quadrature rule
to approximate the conditional mathematical expectations and apply cubic
spline interpolation to compute spatial non-grid points. In order to balance
the time and space error, we set $\Delta t$ and $\Delta x$ satisfy the
equality $(\Delta t)^{p+1}=(\Delta x)^{q}$, where $q=4$ is the order of
spatial interpolation error and $p$ is the convergence order of the numerical
scheme. In the following tables, CR stands for convergence rate with respect
to time step $\Delta t$, $|Y_{0}-Y^{0}|$ and $|Z_{0}-Z^{0}|$ denote the
absolute errors between the exact and numerical solutions for $Y_{t}$ and
$Z_{t}$ at $(t_{0},x_{0})$, respectively.

\begin{example}\label{example1}
In this example, we consider a linear anticipated BSDEs
\begin{equation*}
-dY_{t}=\mathbb{E[-}\frac{1}{2}Y_{t}-(Y_{t+\frac{\text{ }t^{2}}{4}}\sin
\frac{\text{ }t^{2}}{4}+Z_{t+\frac{\text{ }t^{2}}{4}}\cos\frac{\text{ }t^{2}
}{4})e^{\frac{-t^{2}}{8}}|\mathcal{F}_{t}]dt-Z_{t}dW_{t},\text{ \ }t\in
\lbrack0,T],
\end{equation*}
with the terminal condition
\[Y_{t}=\exp\left(  t\right)  \sin\left(  t+W_{t}\right)  ,\;\;Z_{t}=\exp\left(
t\right)  \cos\left(  t+W_{t}\right)  ,\text{ \ }t\in\left[  T,T+S\right]  .\]

The exact solution is $(Y_{t},Z_{t})=(\exp(t)\sin(t+W_{t}),\exp(t)\cos
(t+W_{t}))$. We let the Brownian motion start at the time-space point $(0,0)$
and $T=1$, then $S=\frac{1}{4}$, the exact solution $\left(  Y_{t}
,Z_{t}\right)  $ at initial time $t_{0}=0$ is $\left(  Y_{0},Z_{0}\right)
=\left(  0,1\right)  $. In table \ref{Table1}, we have listed the errors
$\left\vert Y_{0}-Y^{0}\right\vert $ and $\left\vert Z_{0}-Z^{0}\right\vert $
and their convergence rates of our explicit $\theta$-schemes with different
time partitions.
\begin{table}[ptbh]
\caption{Errors and convergence rates of Example \ref{example1}.}
\label{Table1}
{\footnotesize
\begin{align*}
&
\begin{tabular}
[c]{l|c|c|c|c}\cline{1-2}\cline{2-5}
& \multicolumn{2}{|c}{$\theta_{1}=0.5$, $\theta_{2}=0.5$, $\theta_{3}=0.5$} &
\multicolumn{2}{|c}{$\theta_{1}=1,\ \theta_{2}=0$, $\theta_{3}$ $=0.5$
}\\\hline
$M$ & $\ \ \left\vert Y_{0}-Y^{0}\right\vert $\ \ \  & $\left\vert Z_{0}
-Z^{0}\right\vert $ & $\ \left\vert Y_{0}-Y^{0}\right\vert $ \  & $\left\vert
Z_{0}-Z^{0}\right\vert $\\\hline
35 & 2.329E-04 & 2.575E-04 & 2.844E-02 & 6.077E-02\\\hline
55 & 9.520E-05 & 1.026E-04 & 1.812E-02 & 3.921E-02\\\hline
75 & 5.068E-05 & 5.143E-05 & 1.330E-02 & 2.895E-02\\\hline
95 & 3.201E-05 & 3.484E-05 & 1.050E-02 & 2.288E-02\\\hline
115 & 2.195E-05 & 2.407E-05 & 8.675E-03 & 1.895E-02\\\hline
CR & 1.988 & 2.000 & 0.998 & 0.980\\\cline{1-2}\cline{2-5}\cline{4-4}
\end{tabular}
\\
&
\begin{tabular}
[c]{l|c|c|c|c}\cline{1-2}\cline{2-5}
& \multicolumn{2}{|c}{$\theta_{1}=0$,$\ \theta_{2}=0.5$,$\ \theta_{3}$ $=0.5$}
& \multicolumn{2}{|c}{$\theta_{1}=0$, $\theta_{2}=0.25$, $\theta_{3}$ $=0.5$
}\\\hline
$M$ & $\ \ \left\vert Y_{0}-Y^{0}\right\vert $\ \  & $\left\vert Z_{0}
-Z^{0}\right\vert $ & $\ \left\vert Y_{0}-Y^{0}\right\vert $\ \  & $\left\vert
Z_{0}-Z^{0}\right\vert $\\\hline
\multicolumn{1}{l|}{35} & 1.690E-02 & 1.494E-02 & 1.170E-02 &
9.615E-03\\\hline
\multicolumn{1}{l|}{55} & 1.076E-02 & 9.367E-03 & 7.307E-03 &
6.223E-03\\\hline
\multicolumn{1}{l|}{75} & 7.897E-03 & 6.820E-03 & 5.310E-03 &
4.600E-03\\\hline
\multicolumn{1}{l|}{95} & 6.235E-03 & 5.362E-03 & 4.175E-03 &
3.614E-03\\\hline
\multicolumn{1}{l|}{115} & 5.152E-03 & 4.418E-03 & 3.437E-03 &
2.995E-03\\\hline
CR & 0.999 & 1.025 & 1.030 & 0.982\\\cline{1-2}\cline{2-5}\cline{4-4}
\end{tabular}
\\
&
\begin{tabular}
[c]{l|c|c|c|c}\cline{1-2}\cline{2-5}
& \multicolumn{2}{|c}{$\theta_{1}=1$, $\theta_{2}=0.5$,$\theta_{3}$ $=0.75$} &
\multicolumn{2}{|c}{$\theta_{1}=0.5$,$\theta_{2}=0.5$,$\theta_{3}$ $=0.25$
}\\\hline
$M$ & $\ \ \left\vert Y_{0}-Y^{0}\right\vert $\ \  & $\left\vert Z_{0}
-Z^{0}\right\vert $ & $\ \left\vert Y_{0}-Y^{0}\right\vert $\ \  & $\left\vert
Z_{0}-Z^{0}\right\vert $\\\hline
35 & 1.290E-02 & 1.476E-03 & 1.108E-02 & 4.766E-02\\\hline
55 & 8.239E-03 & 1.003E-03 & 7.180E-03 & 3.058E-02\\\hline
75 & 6.053E-03 & 7.519E-04 & 5.313E-03 & 2.253E-02\\\hline
95 & 4.786E-03 & 5.878E-04 & 4.204E-03 & 1.776E-02\\\hline
115 & 3.956E-03 & 4.921E-04 & 3.486E-03 & 1.470E-02\\\hline
CR & 0.994 & 0.928 & 0.972 & 0.989\\\cline{1-2}\cline{2-5}\cline{4-4}
\end{tabular}
\end{align*}
}\end{table}
\end{example}

\begin{table}[ptbh]
\caption{Errors and convergence rates of Example \ref{example2}.}
\label{Table2}
{\footnotesize
\begin{align*}
&
\begin{tabular}
[c]{l|c|c|c|c}\cline{1-2}\cline{2-5}
& \multicolumn{2}{|c}{$\theta_{1}=0.5$, $\theta_{2}=0.5$, $\theta_{3}=0.5$} &
\multicolumn{2}{|c}{$\theta_{1}=1$,$\ \theta_{2}=1$,$\ \theta_{3}$ $=1$
}\\\hline
$M$ & $\ \ \left\vert Y_{0}-Y^{0}\right\vert $\ \ \  & $\left\vert Z_{0}
-Z^{0}\right\vert $ & $\ \left\vert Y_{0}-Y^{0}\right\vert $\ \  & $\left\vert
Z_{0}-Z^{0}\right\vert $\\\hline
35 & 1.980E-04 & 4.511E-04 & 1.047E-02 & 9.548E-03\\\hline
55 & 7.953E-05 & 1.828E-04 & 6.729E-03 & 6.103E-03\\\hline
75 & 4.602E-05 & 9.998E-05 & 4.949E-03 & 4.481E-03\\\hline
95 & 2.573E-05 & 6.076E-05 & 4.001E-03 & 3.576E-03\\\hline
115 & 1.734E-05 & 4.118E-05 & 3.306E-03 & 2.956E-03\\\hline
CR & 2.040 & 2.010 & 0.967 & 0.985\\\cline{1-2}\cline{2-5}\cline{4-4}
\end{tabular}
\\
&
\begin{tabular}
[c]{l|c|c|c|c}\cline{1-2}\cline{2-5}
& \multicolumn{2}{|c}{$\theta_{1}=0$, $\theta_{2}=0.5$, $\theta_{3}$ $=0.5$} &
\multicolumn{2}{|c}{$\theta_{1}=0$, $\theta_{2}=0.25$, $\theta_{3}$ $=0.5$
}\\\hline
$M$ & $\ \ \left\vert Y_{0}-Y^{0}\right\vert $\ \  & $\left\vert Z_{0}
-Z^{0}\right\vert $ & $\ \ \left\vert Y_{0}-Y^{0}\right\vert $\ \  &
$\left\vert Z_{0}-Z^{0}\right\vert $\\\hline
35 & 2.309E-02 & 8.741E-03 & 1.736E-02 & 1.002E-02\\\hline
55 & 1.477E-02 & 5.736E-03 & 1.097E-02 & 6.270E-03\\\hline
75 & 1.085E-02 & 4.267E-03 & 8.020E-03 & 4.559E-03\\\hline
95 & 8.579E-03 & 3.397E-03 & 6.358E-03 & 3.596E-03\\\hline
115 & 7.093E-03 & 2.821E-03 & 5.244E-03 & 2.959E-03\\\hline
CR & 0.992 & 0.951 & 1.006 & 1.025\\\cline{1-2}\cline{2-5}\cline{4-4}
\end{tabular}
\\
&
\begin{tabular}
[c]{l|c|c|c|c}\cline{1-2}\cline{2-5}
& \multicolumn{2}{|c}{$\theta_{1}=1$, $\theta_{2}=0.5$, $\theta_{3}$ $=0.75$}
& \multicolumn{2}{|c}{$\theta_{1}=1,\ \theta_{2}=0$, $\theta_{3}$ $=0.5$
}\\\hline
$M$ & $\ \ \left\vert Y_{0}-Y^{0}\right\vert $\ \  & $\left\vert Z_{0}
-Z^{0}\right\vert $ & $\ \ \left\vert Y_{0}-Y^{0}\right\vert $ \  &
$\left\vert Z_{0}-Z^{0}\right\vert $\\\hline
35 & 1.881E-02 & 8.935E-03 & 3.430E-02 & 9.079E-03\\\hline
55 & 1.207E-02 & 5.859E-03 & 2.226E-02 & 5.904E-03\\\hline
75 & 8.880E-03 & 4.351E-03 & 1.647E-02 & 4.369E-03\\\hline
95 & 7.058E-03 & 3.475E-03 & 1.300E-02 & 3.439E-03\\\hline
115 & 5.837E-03 & 2.884E-03 & 1.078E-02 & 2.854E-03\\\hline
CR & 0.983 & 0.951 & 0.974 & 0.974\\\cline{1-2}\cline{2-5}\cline{4-4}
\end{tabular}
\end{align*}
}\end{table}

\begin{example}\label{example2}
In this example, we test the following nonlinear anticipated
BSDEs with $\delta(t)=\frac{1}{4}t^{2}$.
\begin{equation*}
\left\{
\begin{array}
[c]{ll}
-dY_{t}=\mathbb{E[}\frac{Y_{t}-2\left(  Y_{t+\delta(t)}\sin\delta
(t)+Z_{t+\delta(t)}\cos\delta(t)\right)  e^{\frac{\delta(t)}{2}}}{Y_{t}
^{2}+Z_{t}^{2}+1}|\mathcal{F}_{t}]dt-Z_{t}dW_{t}, & t\in\lbrack0,T];\\
Y_{t}=\sin\left(  t+W_{t}\right)  , & t\in\left[  T,T+S\right]  ;\\
Z_{t}=\cos\left(  t+W_{t}\right)  , & t\in\left[  T,T+S\right]  .
\end{array}
\right.
\end{equation*}
The exact solution is
\[
\begin{array}
[c]{ll}
Y_{t}=\sin\left(  t+W_{t}\right)  , & \;Z_{t}=\cos\left(  t+W_{t}\right)
,\text{ \ \ \ \ \ }t\in\lbrack0,T+S].
\end{array}
\]
The Brownian motion starts at the time-space point $(0,0),$ $T=1$ and
$S=\frac{1}{4}$, the exact solution $\left(  Y_{t},Z_{t}\right)  $ at initial
time $t_{0}=0$ is $\left(  Y_{0},Z_{0}\right)  =\left(  0,1\right)  $. In
table \ref{Table2}, we show the errors $\left\vert Y_{0}-Y^{0}\right\vert $
and $\left\vert Z_{0}-Z^{0}\right\vert $ and their convergence rates of our
explicit $\theta$-schemes with different time partitions.
\end{example}

\begin{remark}
The numerical results above show that our numerical schemes defined in
$\left(  \ref{scheme}\right)  $ work very well for solving anticipated BSDEs
whether the generator is linear or nonlinear.
\end{remark}

\begin{remark}
The convergence rate of the scheme depends on the choice of parameters. If
$\theta_{i}\left(  i=1,2,3\right)  $ is not equal to $\frac{1}{2}$, the
convergence rate of the schemes is first order, if $\theta_{1}=\theta
_{2}=\theta_{3}=\frac{1}{2}$, the convergence rate is second order, which is
in good agreement with our theoretical analysis.
\end{remark}
\section{Conclusions}

In this paper, we propose a class of stable explicit $\theta$-schemes for
solving anticipated BSDEs. By choosing different $\theta_{i}$ $\left(
i=1,2,3\right)  $, we obtain many different numerical schemes. We also analyze
the stability of our numerical schemes and strictly prove the error estimates.
When $\theta_{1}=\theta_{2}=\theta_{3}=\frac{1}{2}$, the scheme is a second
order numerical scheme, which is very effective for solving anticipated BSDEs.
The numerical tests in section 5 powerful back up the theoretical results.
This seems to be the first time to design high order numerical schemes for
anticipated BSDEs.

\appendix

\section{Appendix}

\subsection{The proof of Lemma \ref{lemma}}

\begin{proof}
For $\left(  \ref{4.1}\right)  $, replacing $n$ by $n+s\ $with $0\leq s\leq
N-n-1$ and multiplying $\left(  1+C_{2}\Delta t\right)  ^{s}$ on the both side
of the inequality, we rewrite it as follow%

\begin{equation}%
\begin{array}
[c]{l}%
(1+C_{2}\Delta t)^{s}A_{n+s}+(1+C_{2}\Delta t)^{s}C_{1}\Delta tB_{n+s}\\
\qquad\qquad\ \ \ \ \ \ \ \ \ \ \ \ \leq(1+C_{2}\Delta t)^{s+1}A_{n+s+1}%
+(1+C_{2}\Delta t)^{s}C_{3}\Delta t\sum\limits_{i\in\mathcal{I}_{n+s}%
}A_{n+s+i}\\
\qquad\qquad\quad\ \ \ \ \ \ \ \ \ \ \ \ +(1+C_{2}\Delta t)^{s}C_{3}\Delta
t\sum_{i\in\mathcal{I}_{n+s}}B_{n+s+i}+(1+C_{2}\Delta t)^{s}R^{n+s}.
\end{array}
\label{4.3}%
\end{equation}
Adding up the both side of $\left(  \ref{4.3}\right)  $ about $s$ from $0$ to
$N-n-1$, we get%
\begin{equation}%
\begin{array}
[c]{l}%
A_{n}+C_{1}\Delta t\sum\limits_{s=0}^{N-n-1}(1+C_{2}\Delta t)^{s}B_{n+s}\\
\;\ \ \ \ \ \leq\left(  1+C_{2}\Delta t\right)  ^{N-n}A_{N}+C_{3}\Delta
t\sum\limits_{s=0}^{N-n-1}(1+C_{2}\Delta t)^{s}\sum\limits_{i\in
\mathcal{I}_{n+s}}A_{n+s+i}\\
\;\ \ \ \ \ \ \ \ +C_{3}\Delta t\sum\limits_{s=0}^{N-n-1}(1+C_{2}\Delta
t)^{s}\sum_{i\in\mathcal{I}_{n+s}}B_{n+s+i}+\sum\limits_{s=0}^{N-n-1}%
(1+C_{2}\Delta t)^{s}R^{n+s}.
\end{array}
\label{4.4}%
\end{equation}
Observe that fixed $i\in\mathcal{I}_{n+s}$, we have
\[%
\begin{array}
[c]{l}%
\sum\limits_{s=0}^{N-n-1}(1+C_{2}\Delta t)^{s}A_{n+s+i}\leq(1+C_{2}\Delta
t)^{N-n-1}\sum\limits_{i=n+1}^{N+K}A_{i},\\
\sum\limits_{s=0}^{N-n-1}(1+C_{2}\Delta t)^{s}B_{n+s+i}\leq\sum\limits_{i=n+1}%
^{N+K}(1+C_{2}\Delta t)^{i-n}B_{i},
\end{array}
\]
then
\begin{equation}%
\begin{array}
[c]{l}%
\sum\limits_{s=0}^{N-n-1}(1+C_{2}\Delta t)^{s}\sum\limits_{i\in\mathcal{I}%
_{n+s}}A_{n+s+i}\leq I_{0}(1+C_{2}\Delta t)^{N-n-1}\sum\limits_{i=n+1}%
^{N+K}A_{i},\\
\sum\limits_{s=0}^{N-n-1}(1+C_{2}\Delta t)^{s}\sum\limits_{i\in\mathcal{I}%
_{n+s}}B_{n+s+i}\leq I_{0}\sum\limits_{i=n+1}^{N+K}(1+C_{2}\Delta
t)^{i-n}B_{i}.
\end{array}
\label{4.5}%
\end{equation}
\bigskip Inserting $\left(  \ref{4.5}\right)  $ into $\left(  \ref{4.4}%
\right)  $, we deduce that:
\begin{equation}%
\begin{array}
[c]{l}%
A_{n}+C_{1}\Delta t\sum\limits_{i=n}^{N-1}(1+C_{2}\Delta t)^{i-n}B_{i}\\
\;\ \ \ \ \ \ \ \leq e^{C_{2}\hat{T}}A_{N}+I_{0}C_{3}e^{C_{2}\hat{T}}\Delta
t\sum\limits_{i=n+1}^{N+K}A_{i}\ +I_{0}C_{3}\Delta t\sum\limits_{i=n}%
^{N+K}(1+C_{2}\Delta t)^{i-n}B_{i}\\
\;\ \ \ \ \ \ \ \ \ \ +\sum\limits_{i=0}^{N-1}(1+C_{2}\Delta t)^{i}R^{i}%
\end{array}
\label{4.6}%
\end{equation}
Which leads to%
\begin{equation}%
\begin{array}
[c]{l}%
A_{n}+\left(  C_{1}-I_{0}C_{3}\right)  \Delta t\sum\limits_{i=n}^{N-1}%
(1+C_{2}\Delta t)^{i-n}B_{i}\\
\;\ \ \ \ \ \ \ \leq e^{C_{2}\hat{T}}A_{N}+I_{0}C_{3}e^{C_{2}\hat{T}}\Delta
t\sum\limits_{i=n+1}^{N+K}A_{i}+I_{0}C_{3}\Delta t\sum\limits_{i=N}%
^{N+K}(1+C_{2}\Delta t)^{i}B_{i}\\
\;\ \ \ \ \ \ \ \ \ \ +\sum\limits_{i=0}^{N-1}(1+C_{2}\Delta t)^{i}R^{i}.
\end{array}
\label{4.7}%
\end{equation}
Let
\begin{align*}
\alpha &  =I_{0}C_{3}e^{C_{2}\hat{T}},\\
\beta &  =e^{C_{2}\hat{T}}A_{N}++I_{0}C_{3}\Delta t\sum\limits_{i=N}%
^{N+K}(1+C_{2}\Delta t)^{i}B_{i}+\sum\limits_{i=0}^{N-1}(1+C_{2}\Delta
t)^{i}R^{i},
\end{align*}
From $\left(  \ref{4.7}\right)  $, we have
\begin{equation}
A_{n}\leq\alpha\Delta t\sum\limits_{i=n+1}^{N+K}A_{i}+\beta\label{4.8}%
\end{equation}
Using Lemma $\ref{gronwall lemma}$ to $\left(  \ref{4.8}\right)  $ we obtain
\[
A_{n}\leq e^{\alpha\hat{T}}\left(  \beta+\alpha K\Delta tM_{A}\right)
,\;\;M_{A}=\max_{N\leq i\leq N+K}A_{i}.
\]
which is%
\begin{equation}%
\begin{array}
[c]{l}%
A_{n}\leq e^{\alpha\hat{T}}\left(  e^{C_{2}\hat{T}}A_{N}+I_{0}C_{3}\Delta
t\sum\limits_{i=N}^{N+K}(1+C_{2}\Delta t)^{i}B_{i}\right. \\
\;\ \ \ \ \ \ \left.  +\sum\limits_{i=0}^{N-1}(1+C_{2}\Delta t)^{i}%
R^{i}+\alpha K\Delta tM_{A}\right)  .
\end{array}
\label{4.9}%
\end{equation}
Let $C_{5}=$ $e^{\alpha\hat{T}}\max\left(  e^{C_{2}\hat{T}},I_{0}%
C_{3},1,\alpha\right)  $, Inserting $\left(  \ref{4.9}\right)  $ into the
right side of $\left(  \ref{4.7}\right)  $, since $C_{1}-I_{0}C_{3}\geq
C_{4}>0$, we get%
\begin{equation}%
\begin{array}
[c]{l}%
A_{n}+C_{4}\Delta t\sum\limits_{i=n}^{N-1}B_{i}\\
\;\ \ \ \ \ \leq e^{C_{2}\hat{T}}A_{N}+I_{0}C_{3}\Delta t\sum\limits_{i=N}%
^{N+K}(1+C_{2}\Delta t)^{i}B_{i}+\sum\limits_{i=0}^{N-1}(1+C_{2}\Delta
t)^{i}R^{i}\\
\;\ \ \ \ \ \ \ \ +\alpha\Delta t\sum\limits_{i=n+1}^{N+K}\left[  C_{5}\left(
A_{N}+\Delta t\sum\limits_{j=N}^{N+K}(1+C_{2}\Delta t)^{j}B_{j}+K\Delta
tM_{A}\right.  \right. \\
\;\ \ \ \ \ \ \ \ +\left.  \left.  \sum\limits_{i=0}^{N-1}(1+C_{2}\Delta
t)^{i}R^{i}\right)  \right] \\
\;\ \ \ \ \ \leq\left(  e^{C_{2}\hat{T}}+\alpha\hat{T}C_{5}\right)
A_{N}+e^{C_{2}\hat{T}}\left(  3C_{3}\hat{T}+\alpha C_{5}\hat{T}^{2}\right)
M_{B}\\
\;\ \ \ \ \ \ \ \ +\alpha C_{5}\hat{T}^{2}M_{A}+e^{C_{2}\hat{T}}\left(
1+\alpha\hat{T}C_{5}\right)  \sum\limits_{i=0}^{N-1}R^{i}\\
\;\ \ \ \ \ \leq C_{6}\left(  A_{N}+M_{A}+M_{B}+\sum\limits_{i=0}^{N-1}%
R^{i}\right)  ,
\end{array}
\label{4.10}%
\end{equation}
where $C_{6}=2e^{C_{2}\hat{T}}\max\left\{  1,I_{0}C_{3}\hat{T},\alpha
C_{5}\hat{T},\alpha C_{5}\hat{T}^{2}\right\}  $. We can complete the proof by
using $C=\frac{C_{6}}{\min\left\{  1,C_{4}\right\}  }$ in $\left(
\ref{4.10}\right)  $.
\end{proof}

\end{document}